%0------------------------------------------------------------------------------
% Beginning of journal.tex
%------------------------------------------------------------------------------
%
% AMS-LaTeX version 2 sample file for journals, based on amsart.cls.
%
%        ***     DO NOT USE THIS FILE AS A STARTER.      ***
%        ***  USE THE JOURNAL-SPECIFIC *.TEMPLATE FILE.  ***
%
% Replace amsart by the documentclass for the target journal, e.g., tran-l.
%

%% My convention

\newcommand{\bbC}{\mathbb{C}}

\newcommand{\bbP}{\mathbb{P}}
\newcommand{\bbQ}{\mathbb{Q}}

\newcommand{\bbZ}{\mathbb{Z}}

\newcommand{\Tr}{\textup{Tr}}
\newcommand{\rank}{\textup{rank}}
\newcommand{\Pic}{\textup{Pic}}

\newcommand{\DR}{\text{DR}}

\newcommand{\crys}{\text{crys}}

\newcommand{\NS}{\text{NS}}
\newcommand{\Br}{\text{Br}}

%%%%%%%%%%%%%%%%%%%%%%%%%%%%%%%%%%%%%%%%%%%%%%%%%%%%%%%%%%%%%%%%%
%%%%%%%%%%%%%%%%%%%%%%%%%%%%%%%%%%%%%%%%%%%%%%%%%%%%%%%%%%%%%%%%%
\overfullrule=0pt
\documentclass[11pt]{amsart}
\usepackage{amscd, amsmath, amsthm, amssymb}
%\usepackage{amsmath,amsthm,amsfonts,eucal,epsfig}
%%%%%%%%%%%%%%%%%%%%%%%%%%%%%%%%%%%%%%%%%%%%%%%%%%%%%%%%%%%%%%%%%
\newtheorem{theorem}{Theorem}[section]
\newtheorem{lemma}[theorem]{Lemma}
\newtheorem{proposition}[theorem]{Proposition}

\newtheorem{corollary}[theorem]{Corollary}
\theoremstyle{definition}     % italic or bold etc.

\newtheorem{remark}[theorem]{Remark}
\numberwithin{equation}{section}
%Absolute value notation

%%%%%%%%%% basic setup %%%%%%%%%%%%%%%%%

%%\newcommand{\vtau}{{\mathaccent20 \tau}}

%\newcommand{\ord}{\text{ord}}

\begin{document}

\title[K3 surfaces with an order $60$ automorphism]{K3 surfaces  with an order $60$ automorphism and a characterization of supersingular K3 surfaces with Artin invariant 1}

\author[J. Keum]{JongHae Keum}
\address{School of Mathematics, Korea Institute for Advanced Study, Seoul 130-722, Korea } \email{jhkeum@kias.re.kr}
\thanks{Research supported by National Research Foundation of Korea (NRF grant).}

\subjclass[2000]{Primary 14J28, 14J50, 14J27}

\date{May 2012, September 2013}
\begin{abstract} In characteristic $p=0$ or $p>5$, we show that a K3 surface with an order $60$ automorphism is unique up to isomorphism.
As a consequence, we characterize the supersingular K3 surface
with Artin invariant 1 in characteristic $p\equiv 11$ (mod 12) by
a cyclic symmetry of order 60.
\end{abstract}

\maketitle

Let $X$ be a K3 surface over an algebraically closed field $k$ of
characteristic $p\ge 0$. An automorphism $g$ of $X$ is called
\emph{symplectic} if it preserves a regular 2-form $\omega_X$, and
 \emph{purely non-symplectic} if no power of $g$ is symplectic except the identity.

Over $k=\bbC$, Xiao \cite{Xiao} and Machida and Oguiso \cite{MO}
proved that a positive integer $N$ is the order of a purely
non-symplectic automorphism of a complex K3 surface if and only if
$\phi(N)\le 20$ and $N\neq 60$, where $\phi$ is the Euler
function. On the other hand, there is a K3 surface with an
automorphism of order 60 (\cite{K} Example 3.2):
\begin{equation}\label{formula}
X_{60}:= (y^2+x^3+t_0t_1^{11}-t_0^{11}t_1 = 0)\subset
\mathbb{P}(4,6,1,1),
 \end{equation}
\begin{equation}\label{form2}
 g_{60}(t_0,t_1,x,y)=(t_0,\zeta_{60}^6t_1,\zeta_{60}^2x,\zeta_{60}^3y)
 \end{equation}
 where $\zeta_{60}\in k$ is a primitive 60th  root of unity.
The K3 surface $X_{60}$ is defined over the integers and both the
surface and the automorphism have a good reduction mod $p$ unless
$p=2$, 3, 5. 

For an automorphism $g$ of finite order of a K3 surface $X$, we
write
$${\rm ord}(g)=m.n$$ if $g$ is of order $mn$ and the
natural homomorphism $$\langle g\rangle\to {\rm GL}(H^0(X,
\Omega^2_X))$$ has kernel of order $m$ and image of order $n$. We call $n$ the \textit{non-symplectic order} of $g$.

The main result of the paper is the following.

\begin{theorem} \label{main} Let $k$ be an algebraically closed
field of characteristic $p=0$ or $p>5$. Let $X$ be a K3 surface
defined over $k$ with an automorphism $g$ of order $60$. Then
\begin{enumerate}
\item ${\rm ord}(g)=5.12$;
\item the pair $(X, \langle g\rangle)$ is isomorphic to the pair $(X_{60}, \langle g_{60}\rangle)$, i.e. there is an isomorphism $f:X\to X_{60}$ such that $f \langle g\rangle f^{-1}=\langle g_{60}\rangle$.
\end{enumerate}
\end{theorem}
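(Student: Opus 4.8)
The plan is to first pin down $\mathrm{ord}(g)$, then deduce from the symplectic part of $g$ that $X$ is an isotrivial elliptic K3 of a very rigid type, identify it with $X_{60}$ over $\mathbb{C}$, and finally descend the positive‑characteristic case to characteristic $0$ by lifting. \textbf{Step 1 ($\mathrm{ord}(g)=5.12$).} Write $\mathrm{ord}(g)=m.n$ with $mn=60$; then $g^{n}$ is a symplectic automorphism of order $m$, which is prime to $p$ since $p=0$ or $p>5$, so the known bounds on the orders of tame symplectic automorphisms of K3 surfaces force $m\in\{1,2,3,4,5,6\}$. I eliminate $m\ne5$. If $m=1$, then $g$ is purely non‑symplectic of order $60$, impossible by Xiao and Machida--Oguiso over $\mathbb{C}$, and over any $k$ with $p>5$ by the same argument using the holomorphic and topological Lefschetz fixed point formulas (valid for $\ell$‑adic and crystalline/de~Rham cohomology). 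If $m\in\{2,3,4,6\}$, then $g^{12}$ is purely non‑symplectic of order $5$, $g^{30}$ is a non‑symplectic involution and $g^{20}$ is non‑symplectic of order $3$; feeding the classification of non‑symplectic automorphisms of prime order $2,3,5$ (their fixed loci and the ranks of their invariant lattices) into the Lefschetz formulas for these powers, and using that $\langle g\rangle$ acts compatibly on all these fixed loci and on $\mathrm{NS}(X)$, one reaches a numerical contradiction in each case. Hence $\mathrm{ord}(g)=5.12$.

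\textbf{Step 2 (over $\mathbb{C}$, $X\cong X_{60}$).} Let $\tau=g^{12}$, symplectic of order $5$: its co‑invariant lattice $S_{\tau}$ has rank $16$ and lies in $\mathrm{NS}(X)$, so $\rho(X)\ge16$. Since $g^{5}$ is purely non‑symplectic of order $12$, $\Phi_{12}$ divides the characteristic polynomial of $g^{5}$ on $T_{X}$, so $\mathrm{rank}\,T_{X}\ge4$; and because $g^{30}$ acts by $-1$ on $T_{X}$ while $g^{20}$ has no invariant vector there, the eigenvalues $\pm i$ are excluded and this characteristic polynomial equals $\Phi_{12}$ exactly. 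Thus $\mathrm{rank}\,T_{X}=4$ and $\rho(X)=18$. Next, $g^{30}$ acts by $-1$ on $S_{\tau}$ and trivially on the rank‑$2$ lattice $\mathrm{NS}(X)^{\tau}$ (which contains the $\langle g\rangle$‑invariant ample class), so the invariant lattice of the involution $g^{30}$ is the hyperbolic plane $U$; hence its fixed locus is a smooth curve of genus $10$ together with a single $(-2)$‑curve, and the primitive isotropic nef class in $U$ defines a $\langle g\rangle$‑invariant elliptic fibration $\pi\colon X\to\mathbb{P}^{1}$ with that $(-2)$‑curve as zero section. The action of $\langle g\rangle$ on the base has order dividing $30$, divisible by $5$, and not equal to $5$; in the orders $15$ and $30$ cases the Weierstrass coefficients would be monomials in $t_{0},t_{1}$, leaving at most two singular fibres, which is impossible for a K3 surface, so the base action has order $10$, the fibrewise generator has order $6$, and an order‑$6$ automorphism of a smooth fibre forces $j\equiv0$: $\pi$ is $y^{2}=x^{3}+b(t_{0},t_{1})$ with $b$ of degree $12$ and semi‑invariant under $t_{1}\mapsto\zeta_{10}t_{1}$. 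From $\chi_{\mathrm{top}}(X)=24$ together with $S_{\tau}\subset\mathrm{NS}(X)$ one gets that the $12$ singular fibres are irreducible of type $II$, so $b$ has $12$ simple zeros, and the only such semi‑invariant form, after scaling the coordinates, is $t_{0}t_{1}(t_{1}^{10}-t_{0}^{10})$; this identifies $(X,\pi)$ with $X_{60}$, and matching the induced action of $g$ on $(t_{0},t_{1},x,y)$ with a generator of $\langle g_{60}\rangle$ yields the isomorphism of pairs. (Alternatively: once $\rho(X)=18$ and $(T_{X},g|_{T_{X}})$ are known, $H^{2}(X,\mathbb{Z})$ with its $g$‑action is determined, the period of $X$ is rigid, and the global Torelli theorem gives the uniqueness directly.)

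\textbf{Step 3 (characteristic $p>5$).} Here the order of $g$ is prime to $p$, so the pair $(X,\langle g\rangle)$ lifts to characteristic $0$ over a suitable complete discrete valuation ring; the generic fibre is a K3 surface carrying an order‑$60$ automorphism, hence isomorphic over $\overline{\mathbb{Q}}$ to $X_{60}$ by Step 2, and this isomorphism specializes to identify $X$ (with $\langle g\rangle$) with $X_{60}\bmod p$ (with $\langle g_{60}\rangle$). When $X$ is supersingular this forces its Artin invariant to be $1$, which together with the fact that $X_{60}\bmod p$ has Artin invariant $1$ for $p\equiv11\pmod{12}$ gives the stated characterization.

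\textbf{Expected main obstacle.} The hard core is Step 1: excluding $m\in\{1,2,3,4,6\}$ uniformly in the characteristic rests on a delicate fixed‑point analysis of several commuting powers of $g$ via Lefschetz formulas and on the positive‑characteristic version of the classification of low‑order non‑symplectic automorphisms. Secondary difficulties are the lattice inputs in Step 2 — that $g^{30}$ acts by $-1$ on $S_{\tau}$ and that all $12$ singular fibres have type $II$, i.e.\ controlling the $\langle g\rangle$‑module structure of the order‑$5$ co‑invariant lattice and the fibre configuration of $\pi$ — and verifying that the equivariant lifting in Step 3 exists and is unobstructed.
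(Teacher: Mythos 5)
There are genuine gaps, and they sit exactly where the paper's real content lies. In Step 1 you dispose of the purely non-symplectic case ($m=1$) in characteristic $p>5$ by asserting that the Machida--Oguiso argument goes through ``using the holomorphic and topological Lefschetz fixed point formulas (valid for $\ell$-adic and crystalline/de Rham cohomology)''. This is precisely what fails: the holomorphic Lefschetz formula and the transcendental lattice, the two ingredients of the complex proof, are not available in positive characteristic, and the paper says so explicitly. The core of the paper (Lemma \ref{1.60}) replaces that argument by a long case-by-case elimination of all possible eigenvalue patterns $[1,\zeta_{60}:16,\eta_1,\dots,\eta_5]$ using only the \'etale Lefschetz formula, integrality of the characteristic polynomial, the structure of the fixed locus of the non-symplectic involution $g^{30}$ (Lemma \ref{nsym2}), and, for the last surviving pattern $[\zeta_{12}:4,\pm1]$, a tame quotient-singularity computation on $X/\langle g^{12}\rangle$ ($K_Y^2$ versus the discrepancies of the $\frac15(3,3)$ and $\frac15(2,4)$ points). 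None of this is supplied or even sketched in your proposal, so Step 1 is not a proof in positive characteristic. (Your elimination of $m\in\{2,3,4,6\}$ is likewise only a gesture; the paper relies on its companion results in \cite{K} for these cases.)

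Step 3 has a second fundamental gap: tameness of $g$ does not imply that the pair $(X,\langle g\rangle)$ lifts to characteristic $0$. Lifting the surface alone is fine, but equivariant lifting of a finite automorphism is obstructed in general (supersingular K3 surfaces in small characteristic carry tame symplectic group actions that act on no complex K3 at all), and in the very characteristics of interest here, $p\equiv 11 \pmod{12}$, the relevant surface is supersingular with Artin invariant $1$, so you cannot wave at an unobstructedness statement; you list this as a ``secondary difficulty'', but it is the whole difficulty of your strategy, and no argument is offered. The paper avoids lifting entirely: after Lemma \ref{5.12} pins down $[g^*]=[1,\zeta_{12}{:}4,1,\zeta_{60}{:}16]$, Lemma \ref{60} produces the invariant elliptic fibration with exactly $12$ cuspidal fibres and an order-$10$ base action, and the Weierstrass model ($A=0$, $\Delta=ct_0^2t_1^2(t_1^{10}-t_0^{10})^2$) identifies $(X,\langle g\rangle)$ with $(X_{60},\langle g_{60}\rangle)$ directly in characteristic $p$; the complex case is then run through the same lemmas verbatim with $H^2(X,\mathbb{Z})$. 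Your Step 2 over $\mathbb{C}$ is a legitimately different (lattice-theoretic) route, but even there you assert without proof that the invariant lattice of $g^{30}$ is $U$ (ruling out the genus-$9$ $4$-section alternative, which the paper excludes by the order-$3$ fixed-point count on the fixed curve) and that all $12$ singular fibres are of type $II$ (the paper's orbit analysis on the base). As it stands, the proposal proves neither the positive-characteristic statement nor, without patching, the complex one.
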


 The  non-existence  of a complex K3 surface with a
purely non-symplectic automorphism of order 60 was proved by
Machida and Oguiso \cite{MO}. Their proof does not extend to the
positive characteristic case, as it uses the holomorphic Lefschetz
formula and the notion of transcendental lattice, both not
available in positive characteristic.

Theorem \ref{main} and Main Theorem of \cite{K} determine completely the list of all non-symplectic orders in characteristic $p>0$: 

\begin{corollary} \label{cor} In any fixed characteristic $p>0$, a positive integer $N$ is the
non-symplectic order of an automorphism of a K3 surface if and only if $p\nmid N$, $N\neq 60$ and $\phi(N)\le 20$.
\end{corollary}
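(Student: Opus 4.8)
The plan is to deduce Corollary \ref{cor} by combining Theorem \ref{main} with the characteristic-$p$ classification from \cite{K}. First I would recall that the Main Theorem of \cite{K} establishes, in characteristic $p>0$, that every non-symplectic order $N$ of an automorphism of a K3 surface satisfies $p\nmid N$ and $\phi(N)\le 20$, and conversely that every integer $N$ with $p\nmid N$, $\phi(N)\le 20$, and $N\neq 60$ actually occurs; the only value left ambiguous by \cite{K} is $N=60$ (which has $\phi(60)=16\le 20$), and its realizability is precisely what Theorem \ref{main} settles in the negative.

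The forward implication is then immediate: if $N$ is the non-symplectic order of some automorphism $g$ of a K3 surface $X$ over $k$ of characteristic $p>0$, then by \cite{K} we get $p\nmid N$ and $\phi(N)\le 20$, so it remains to rule out $N=60$. Suppose $N=60$; write ${\rm ord}(g')=m.60$ for a suitable power $g'$ of an appropriate automorphism so that $g'$ has non-symplectic order exactly $60$. The subtlety here is that "non-symplectic order $60$" does not by itself force the total order to be $60$: one must argue that $m=1$, i.e. that there is no K3 surface carrying an automorphism of order $60$ that acts faithfully on $H^0(X,\Omega^2_X)$ together with extra symplectic part. I would handle this by noting that if some automorphism has non-symplectic order $60$, then it has an element of order $60$ in the image in ${\rm GL}(H^0(X,\Omega^2_X))\cong k^\times$, hence $k$ contains a primitive $60$th root of unity, and by passing to a suitable power one obtains an honest order-$60$ automorphism whose non-symplectic order is still $60$; but Theorem \ref{main}(1) says any order-$60$ automorphism has ${\rm ord}(g)=5.12$, i.e. non-symplectic order $12\neq 60$, a contradiction. (If one worries about an order-$60$-acting automorphism of larger total order, the same Theorem \ref{main} applied to a power of order $60$ still gives the contradiction, since that power again has order $60$.)

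For the converse, assume $p\nmid N$, $N\neq 60$, and $\phi(N)\le 20$; then by the existence half of the Main Theorem of \cite{K} there is a K3 surface in characteristic $p$ with an automorphism of non-symplectic order exactly $N$, which is what we want. Thus both directions follow formally, with Theorem \ref{main} supplying exactly the one missing case $N=60$. The main obstacle is purely bookkeeping: making precise the reduction from "non-symplectic order $60$" to "order $60$" so that Theorem \ref{main}(1) can be invoked, and checking that the list $\{N : p\nmid N,\ \phi(N)\le 20,\ N\neq 60\}$ coincides with what \cite{K} proves to be attained — there is no further geometric input needed beyond Theorem \ref{main} and \cite{K}.
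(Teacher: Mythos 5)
Your overall strategy is the same as the paper's: the corollary is obtained by combining Theorem \ref{main} with the Main Theorem of \cite{K} (the paper gives no further argument beyond this citation), and your treatment of the converse direction and of the conditions $p\nmid N$, $\phi(N)\le 20$ is fine. The problem is in the one step you rightly identify as the crux, the reduction from ``non-symplectic order $60$'' to something Theorem \ref{main} can see. Your claim that ``by passing to a suitable power one obtains an honest order-$60$ automorphism whose non-symplectic order is still $60$'' is false in general: if ${\rm ord}(g)=m.60$, the unique order-$60$ power is $g^{m}$, and it acts on $\omega_X$ by $\zeta_{60}^{m}$, so its non-symplectic order is $60/\gcd(60,m)$, not $60$, whenever $\gcd(m,60)>1$. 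Your parenthetical fallback (apply Theorem \ref{main} to $g^{m}$, which has order $60$ and hence ${\rm ord}(g^m)=5.12$) does yield a contradiction for every $m$ with $60/\gcd(60,m)\neq 12$, i.e.\ for all $m$ with $\gcd(60,m)\neq 5$; but when $\gcd(60,m)=5$ there is no contradiction at all, since then the non-symplectic order of $g^m$ really is $12$. Concretely, since $g^{60}$ is a tame symplectic automorphism, $m\le 8$, so the uncovered case is exactly ${\rm ord}(g)=5.60$: a hypothetical automorphism of total order $300$ with symplectic kernel of order $5$. For such a $g$, Theorem \ref{main} applied to $g^{5}$ gives ${\rm ord}(g^5)=5.12$ and $(X,\langle g^5\rangle)\cong(X_{60},\langle g_{60}\rangle)$, which is perfectly consistent, so your argument does not exclude $N=60$ as a non-symplectic order.

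To close the gap you need input beyond Theorem \ref{main}: the natural source is again \cite{K}, whose Main Theorem determines all possible total orders of (tame) automorphisms of K3 surfaces in characteristic $p>3$ — in particular no automorphism of order $300$ (indeed of any order $60m$ with $m\ge 2$) exists — so that only $m=1$ survives and Theorem \ref{main}(1) finishes the proof. In other words, you must quote \cite{K} not merely for ``$\phi(N)\le 20$, with $N=60$ ambiguous,'' but also for its classification of total orders (or of the admissible pairs $m.n$); with that citation made explicit, your argument becomes correct and coincides with the paper's intended one.
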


It is well known that the Fermat quartic surface
$$x_0^4+x_1^4+x_2^4+x_3^4 = 0$$ is a supersingular K3 surface with Artin invariant 1, if the characteristic $p\equiv 3$ (mod
4). This can be seen by using the algorithm  for determining the
Artin invariant of a weighted Delsarte surface whose minimal
resolution is a K3 surface (\cite{Shioda}, \cite{Goto}). The same
algorithm shows that in characteristic $p\equiv 11$ (mod 12) the
surface $X_{60}$ is a supersingular K3 surface with Artin
invariant 1, hence is isomorphic to the Fermat quartic surface,
since a supersingular K3 surface with Artin invariant 1 is unique
up to isomorphism (\cite{Ogus}, \cite{Ogus2}).

\begin{corollary}  In characteristic $p\equiv 11$ $({\rm mod}\,\, 12)$, the Fermat quartic surface is the only K3 surface with an order $60$ automorphism.
\end{corollary}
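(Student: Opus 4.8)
The plan is to read this off from Theorem~\ref{main} together with the two facts recalled just above: that the Shioda--Goto algorithm for weighted Delsarte surfaces identifies $X_{60}$, in characteristic $p\equiv 11\ (\mathrm{mod}\ 12)$, as a supersingular K3 surface with Artin invariant $1$, and that by Ogus such a surface is unique up to isomorphism. So the corollary should follow by stringing these together after checking that the hypotheses of Theorem~\ref{main} hold.

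First I would verify the characteristic condition. If $p\equiv 11\ (\mathrm{mod}\ 12)$ then $p\equiv 3\ (\mathrm{mod}\ 4)$ and $p\equiv 2\ (\mathrm{mod}\ 3)$; in particular $p\notin\{2,3,5\}$, so $p>5$ and Theorem~\ref{main} applies. Hence any K3 surface $X$ over $k$ admitting an automorphism of order $60$ is isomorphic to $X_{60}$. Next, by the Delsarte computation $X_{60}$ is supersingular with Artin invariant $1$ in this characteristic, while the Fermat quartic is also supersingular with Artin invariant $1$ because $p\equiv 3\ (\mathrm{mod}\ 4)$; by the Ogus uniqueness theorem these two surfaces are isomorphic. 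Composing, $X$ is isomorphic to the Fermat quartic.

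For the other half of the word ``only'', I would note that an isomorphism $X_{60}\to(\text{Fermat quartic})$ conjugates $\langle g_{60}\rangle$, a cyclic group of order $60$, onto a cyclic subgroup of order $60$ of $\Aut(\text{Fermat quartic})$; thus the Fermat quartic genuinely lies in the class of K3 surfaces carrying an order $60$ automorphism, so it is \emph{the} one.

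The only step that is not a pure citation is the Shioda--Goto/Goto computation of the Artin invariant of $X_{60}$: one must exhibit $X_{60}$ (or its minimal model) as a weighted Delsarte surface and run the algorithm to obtain Artin invariant exactly $1$ — as opposed to merely supersingularity — precisely when $p\equiv 11\ (\mathrm{mod}\ 12)$. This is the step I would expect to be the main (and essentially only) obstacle; everything else is immediate from Theorem~\ref{main} and the quoted uniqueness results.
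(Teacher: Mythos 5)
Your argument is correct and is essentially the paper's own: apply Theorem~\ref{main} (noting $p\equiv 11 \pmod{12}$ forces $p>5$), use the Shioda--Goto weighted Delsarte computation to see $X_{60}$ is supersingular with Artin invariant $1$ in this characteristic, and conclude via Ogus's uniqueness of the Artin invariant $1$ supersingular K3 that $X_{60}$ is isomorphic to the Fermat quartic. The Delsarte computation you flag is exactly the step the paper disposes of by citing \cite{Shioda} and \cite{Goto}, so there is no gap.
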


Over $k=\bbC$,  Oguiso \cite{Og2} proved that the Fermat quartic
surface is the only K3 surface with a faithful action of a
nilpotent group of order $512=2^9$. Over $k=\bbC$, the surface
$X_{60}$ is not isomorphic to the Fermat quartic surface, as the
former admits a purely non-symplectic automorphism of order $12$,
while the latter has Picard number 20, hence by Nikulin \cite{Nik}
does not admit a purely non-symplectic automorphism of order $n$
with $\phi(n)>2$.

\begin{remark}  In characteristic $p=11$ the Fermat quartic surface
also admits a cyclic action of order 66 (Example 7.5 \cite{K}) and
a symplectic action of the simple groups $M_{22}$, $M_{11}$ and
$L_2(11)$, where $M_{r}$ is one of the Mathieu groups \cite{DK3}.
\end{remark}

Throughout this paper, whenever we work with $l$-adic cohomology we assume $l$ is any prime different from the characteristic.

\bigskip

{\bf Notation}

\medskip
\begin{itemize}
\item ${\rm NS}(X)$ : the N\'eron-Severi group of a variety $X$;
\medskip
\item $X^g={\rm Fix}(g)$ : the fixed locus of an automorphism $g$ of $X$;
\medskip
\item $e(g):=e({\rm Fix}(g))$, the Euler characteristic of ${\rm Fix}(g)$ for $g$ tame;
\medskip
\item $\Tr(g^*|H^*(X)):=\sum_{j=0}^{2\dim X} (-1)^j\Tr (g^*|H^j_{\rm et}(X,{\bbQ}_l))$.

\medskip\noindent
For an automorphism $g$ of a K3 surface $X$,
\end{itemize}
\begin{itemize}
\item ${\rm ord}(g)=m.n$ : $g$ is of order $mn$ and the representation of the group $\langle g^*\rangle$ on $H^0(X, \Omega^2_X)$ has kernel of order $m$;
\medskip
\item $[g^*]=[\lambda_1, \ldots, \lambda_{22}]$ : the list of the eigenvalues of $g^*|H^2_{\rm et}(X,{\bbQ}_l)$.
\medskip
\item $\zeta_a$ : a primitive $a$-th  root of unity in $\overline{\bbQ_l}$;
\medskip
\item $[\zeta_a:\phi(a)]\subset [g^*]$ : all primitive $a$-th  roots of unity appear in $[g^*]$ where $\phi(a)$ indicates the number of
them.
\medskip
\item $[\lambda.r]\subset [g^*]$ : $\lambda$ repeats $r$ times in
$[g^*]$.
\medskip
\item $[(\zeta_a:\phi(a)).r]\subset [g^*]$ : the list $\zeta_a:\phi(a)$ repeats $r$ times in
$[g^*]$.
\end{itemize}

\section{Preliminaries}

 We first recall the following basic
result.

\begin{proposition}\label{integral}$($3.7.3 \cite{Illusie}$)$ Let $g$ be an automorphism of a projective variety $X$ over an algebraically closed field $k$ of
characteristic $p> 0$. Let $l$ be a prime $\neq p$. Then the following hold true.
\begin{enumerate}
\item  The characteristic polynomial of
$g^*|H_{\rm et}^j(X,\bbQ_l)$ has integer coefficients for each
$j$. In particular, if for some positive integer $m$ a primitive
$m$-th root of unity appears with multiplicity $r$ as an
eigenvalue of $g^*|H_{\rm et}^j(X,\bbQ_l)$, then so does each of
its conjugates.
\item The characteristic polynomial of $g^*$ does not depend on the choice of  cohomology, $l$-adic or crystalline.
\end{enumerate}
\end{proposition}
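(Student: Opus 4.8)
The statement to prove is Proposition \ref{integral}, which asserts two facts about the action of an automorphism $g$ of a projective variety $X$ over an algebraically closed field of characteristic $p>0$ on $l$-adic (and crystalline) cohomology: the integrality of the characteristic polynomial of $g^*$ on each $H^j_{\rm et}(X,\bbQ_l)$, and the independence of this characteristic polynomial from the choice of Weil cohomology theory. This is cited as 3.7.3 of Illusie, so my plan is to reconstruct the standard argument that makes both parts fall out of the Lefschetz trace formula applied to all powers of $g$.

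The plan is to begin with part (2), since integrality is cleanest to deduce once independence is in hand. The key input is that for any automorphism $g$ and any power $g^m$, the global Lefschetz number $\sum_j (-1)^j \Tr(g^{*m}\mid H^j_{\rm et}(X,\bbQ_l))$ equals the number of fixed points of $g^m$ (counted with appropriate local multiplicities), which is an integer independent of $l$. The same trace, however, can be computed in crystalline cohomology, and the crystalline and $l$-adic Lefschetz numbers agree because both compute the same intersection-theoretic fixed-point count on the diagonal. To upgrade agreement of alternating sums to agreement of the individual characteristic polynomials, I would invoke the theory of weights: by the Weil conjectures (Deligne), the eigenvalues of $g^*$ on $H^j$ are algebraic numbers all of whose archimedean absolute values have a prescribed size governed by the weight $j$, so the distinct cohomological degrees contribute eigenvalues of distinct absolute value and cannot interfere. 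First I would record that the traces $\Tr(g^{*m}\mid H^j)$ for all $m$, taken degree by degree, determine the characteristic polynomial of $g^*$ on $H^j$ via Newton's identities, and then argue that the weight filtration separates the degrees so that agreement of the total alternating trace for every $m$ forces agreement in each fixed degree $j$.

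For part (1), integrality, I would argue as follows. Let $P_j(T) = \det(T\cdot \mathrm{Id} - g^*\mid H^j_{\rm et}(X,\bbQ_l))$. The coefficients of $P_j$ are, up to sign, the elementary symmetric functions of the eigenvalues, which by Newton's identities are polynomials with rational coefficients in the power sums $\Tr(g^{*m}\mid H^j)$. Each such power sum is a sum of algebraic numbers that is a priori in $\bbQ_l$, but the Lefschetz trace formula shows the alternating sum over $j$ is an ordinary integer for every $m$; combined with the weight separation from part (2)'s reasoning, each individual $\Tr(g^{*m}\mid H^j)$ is an algebraic number lying in $\bbQ_l$ whose conjugates it shares. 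The crucial point is that the eigenvalues of $g^*$ are algebraic integers (being roots of a monic polynomial whose coefficients are fixed by the Galois action, since $g^*$ permutes a suitable integral or $\bbZ_l$-lattice structure on cohomology), and that these coefficients are Galois-invariant, hence rational; being algebraic integers that are rational, they are ordinary integers. The final clause about conjugate roots of unity appearing with equal multiplicity is then immediate: since $P_j(T) \in \bbZ[T]$, the minimal polynomial over $\bbQ$ of any root of unity $\zeta_m$ that occurs — namely the cyclotomic polynomial $\Phi_m(T)$ — divides $P_j(T)$ with some multiplicity, and all primitive $m$-th roots of unity are roots of $\Phi_m$ with the same multiplicity in $P_j$.

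The main obstacle I anticipate is establishing that the characteristic polynomial genuinely has coefficients in $\bbZ$ rather than merely in $\bbQ_l$, and doing so uniformly across the cohomology theories. Rationality is the crux: showing the coefficients lie in $\bbQ$ requires either a weight argument (the eigenvalues in degree $j$ have all archimedean absolute values equal to $q^{j/2}$ in the relevant sense, forcing Galois stability of the eigenvalue multiset) or direct comparison between $l$-adic cohomologies for varying $l$ together with crystalline cohomology to pin down the coefficients in the intersection $\bigcap_l \bbQ_l \cap \bbQ = \bbQ$. Integrality beyond rationality then follows from the fact that $g^*$ preserves a lattice ($H^j_{\rm et}(X,\bbZ_l)$ modulo torsion), making the characteristic polynomial monic with $\bbZ_l$-coefficients; rationality plus $\bbZ_l$-integrality for all $l$ yields $\bbZ$-coefficients. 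I would be careful to cite the Weil conjectures and the relevant comparison isomorphisms rather than reprove them, treating Illusie's 3.7.3 as assembling these standard ingredients into the stated form.
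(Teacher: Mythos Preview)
The paper does not prove this proposition; it is cited from Illusie and used as a black box. Your reconstruction nonetheless contains a genuine gap in the weight argument. You assert that ``by the Weil conjectures (Deligne), the eigenvalues of $g^*$ on $H^j$ are algebraic numbers all of whose archimedean absolute values have a prescribed size governed by the weight $j$'', but Deligne's purity theorem constrains the eigenvalues of the \emph{geometric Frobenius}, not of an arbitrary automorphism. When $g$ has finite order (the only case the paper uses), every eigenvalue of $g^*$ on every $H^j_{\rm et}(X,\bbQ_l)$ is a root of unity, hence of archimedean absolute value $1$, and the cohomological degrees cannot be separated by size. Your passage from ``the alternating trace of every power $g^m$ is a cohomology-independent integer'' to ``the trace on each individual $H^j$ is determined'' therefore breaks down.

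The standard repair, which is what actually underlies Illusie's 3.7.3 (via Katz--Messing), is to \emph{twist by Frobenius}: spread out so that $X$ and $g$ are defined over a finite field $\bbF_q$, and apply the Lefschetz formula to $g\cdot F^n$ for all $n\ge 1$, where $F$ is the $q$-power Frobenius. Since $g^*$ and $F^*$ commute, the eigenvalues of $(g\cdot F^n)^*$ on $H^j$ now genuinely have all archimedean absolute values equal to $q^{nj/2}$ by Deligne, and the degrees separate. The resulting fixed-point counts determine $\Tr(g^*F^{*n}\mid H^j)$ for each $j$ and each $n$, from which one recovers the characteristic polynomial of $g^*$ on $H^j$ independently of the cohomology theory. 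Your $\bbZ_l$-lattice argument for integrality is fine as far as it goes, but it only becomes useful once rationality has been established by this correct route.
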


\begin{proposition}\label{integral'}  Let $g$ be an automorphism of a projective variety $X$ over an algebraically closed field $k$ of
characteristic $p> 0$. Let $l$ be a prime $\neq p$. Then
the following hold true.
\begin{enumerate}
\item  If $g$ is of finite order, then $g$ has an
invariant ample divisor, and $1$ is an eigenvalue of
$g^*|H_{\rm et}^2(X,\bbQ_l)$.
\item  If $X$ is a K3 surface, $g$ is tame and $g^*|H^0(X,\Omega_X^2)$ has
$\zeta_n\in k$ as an eigenvalue, then $g^*|H_{\rm et}^2(X,\bbQ_l)$
has $\zeta_n\in \overline{\bbQ_l}$ as an eigenvalue.
\end{enumerate}
\end{proposition}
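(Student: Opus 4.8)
The plan is to handle the two assertions separately, both by standard Lefschetz-type and lifting arguments. For part (1), I would first produce a $g$-invariant ample class: starting from any ample divisor $H$, the finite average $D := \sum_{i=0}^{N-1}(g^i)^*H$, where $N = \mathrm{ord}(g)$, is $g$-invariant and still ample (a nonzero effective sum of ample classes), so $g$ has an invariant ample divisor. Its class in $H^2_{\mathrm{et}}(X,\bbQ_l)$ is then a nonzero vector fixed by $g^*$, which immediately exhibits $1$ as an eigenvalue of $g^*|H^2_{\mathrm{et}}(X,\bbQ_l)$. (One should note this uses only that $X$ is projective and $g$ has finite order; nothing about K3 surfaces or tameness is needed here.)

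For part (2), the point is to transfer an eigenvalue of $g^*$ on the one-dimensional space $H^0(X,\Omega^2_X)$ to the $l$-adic $H^2$. When $p = 0$ this is classical Hodge theory: $H^2_{\mathrm{et}}(X,\bbQ_l)\otimes\bbC$ contains $H^{2,0} = H^0(X,\Omega^2_X)$ as a $g^*$-stable line, so $\zeta_n$ appears. In characteristic $p > 0$ I would route through crystalline cohomology and the Hodge–de Rham spectral sequence. Since $X$ is a K3 surface, it is known to lift — together with the tame (hence of order prime to $p$) automorphism $g$ — to characteristic $0$: one can use the Deligne–Nygaard lifting of a K3 surface with a polarization, combined with the invariant ample divisor from part (1), and deformation theory of the pair $(X,g)$ (the obstruction lives in a group killed by the prime-to-$p$ order of $g$, so the lift exists after possibly finite base change). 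Call the lift $(\calX,\tilde g)$ over a DVR of mixed characteristic with residue field $k$. By Proposition \ref{integral}(2) the characteristic polynomial of $g^*$ is the same in $l$-adic and crystalline cohomology, and by smooth-proper base change / specialization it agrees with that of $\tilde g^*$ on the generic fiber, which is a complex K3 surface; now apply the characteristic-$0$ case, noting that the action of $\tilde g^*$ on $H^0(\calX_{\bar\eta},\Omega^2)$ reduces to that of $g^*$ on $H^0(X,\Omega^2_X)$ because $\tilde g$ reduces to $g$ and the relative $H^0(\Omega^2)$ is a line bundle on the base compatible with reduction.

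The main obstacle is the lifting step in part (2): one must lift not just the K3 surface but the finite-order automorphism, controlling the obstruction classes in $H^1(X, T_X)$ and $H^2(X,T_X)$ (or their equivariant refinements) and checking that tameness forces these obstructions to vanish after an unramified or tamely ramified base extension. If one prefers to avoid global lifting, an alternative is more direct: use that for tame $g$ the crystalline cohomology $H^2_{\mathrm{crys}}(X/W)$ carries a $g^*$-action commuting with Frobenius, the Hodge filtration is $g^*$-stable, and the graded piece $\mathrm{gr}^2 = H^0(X,\Omega^2_X)$ — so $\zeta_n$ (a prime-to-$p$ root of unity, which lifts uniquely by Hensel from $k$ to $W$) appears as an eigenvalue of $g^*$ on $H^2_{\mathrm{crys}}\otimes K$, and then invoke Proposition \ref{integral}(2) once more to move it back to $H^2_{\mathrm{et}}$. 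Either route reduces everything to linear algebra of a finite-order operator preserving a filtration whose top graded piece realizes the given eigenvalue; the rest is bookkeeping. \qed
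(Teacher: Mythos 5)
For part (1) your argument is the paper's: average an ample divisor over the cyclic group to get a $g$-invariant one; the only (harmless) difference is that you take its class directly in $H^2_{\et}(X,\bbQ_l)$ via the $l$-adic cycle class map (nonzero because its self-intersection is positive), whereas the paper takes the crystalline Chern class and then invokes Proposition \ref{integral}(2). Either way the statement follows, and you correctly note that neither tameness nor the K3 hypothesis is needed here.

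For part (2), your primary route --- lifting the pair $(X,g)$ to characteristic $0$ --- has a genuine gap. The claim that the obstruction to lifting $g$ ``lives in a group killed by the prime-to-$p$ order of $g$'' does not work: the relevant obstruction spaces are $k$-vector spaces, on which multiplication by an integer prime to $p$ is an isomorphism, so it kills nothing; and lifting a K3 surface \emph{together with} a polarization and a finite-order automorphism is a delicate problem that cannot be quoted as a black box (and is nowhere used in the paper). Your fallback route, on the other hand, is essentially the paper's proof, with one point that must be made precise: the Hodge filtration with $F^2=H^0(X,\Omega_X^2)$ lives on $H_{\DR}^2(X)\cong H_{\crys}^2(X/W)/pH_{\crys}^2(X/W)$ (using degeneration of Hodge--de Rham for K3 surfaces), not on $H_{\crys}^2(X/W)$ itself. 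So you first obtain $\zeta_n\in k$ as an eigenvalue of $g^*$ on $H_{\DR}^2(X)$, and you still have to lift it to an eigenvalue on $H_{\crys}^2(X/W)$: a priori the characteristic-zero eigenvalue reducing to $\zeta_n$ could differ from (the Teichm\"uller lift of) $\zeta_n$ by a $p$-power root of unity, and it is exactly tameness --- the order of $g$ is prime to $p$, so all eigenvalues of $g^*$ are prime-to-$p$ roots of unity, which reduce injectively mod $p$ --- that pins it down; this is the ``$np^r$-th root of unity'' step in the paper's proof. Once that is said, your Hensel remark is the right mechanism, and Proposition \ref{integral}(2) transfers the eigenvalue to $H^2_{\et}(X,\bbQ_l)$ exactly as in the paper. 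I would therefore drop the lifting route and state the crystalline/de Rham argument carefully as your proof.
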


\begin{proof}
(1) For any ample divisor $D$ the sum $\sum g^{i}(D)$ is $g$-invariant. A $g^*$-invariant ample line bundle gives a $g^*$-invariant vector in the 2nd crystalline cohomology $H_{\rm crys}^2(X/W)$ under the Chern class map 
$$c_1: \Pic(X)\to H_{\rm crys}^2(X/W).$$ It follows that $1$ is an eigenvalue of
$g^*|H_{\rm crys}^2(X/W)$. Here $W=W(k)$ is the ring of Witt vectors.  Now apply Proposition \ref{integral}(2).

(2)  
The quotient module
$$H_{\crys}^2(X/W)/pH_{\crys}^2(X/W)$$ is a finite dimensional
$k$-vector space isomorphic to the algebraic de Rham cohomology
$H_{\DR}^2(X)$. See \cite{Illusie} for the  crystalline cohomology. It is known that the Hodge to de Rham spectral sequence  $$E_1^{t,s}:=H^s(X, \Omega_X^t)\Rightarrow H_{\DR}^*(X)$$ degenerates at $E_1$, giving
 the Hodge filtration on $H_{\DR}^2(X)$ and the following canonical exact sequences:
$$0\to F^1\to F^0=H_{\DR}^2(X)\to H^2(X, \mathcal{O}_X)\to 0$$
 $$0\to F^2=H^0(X,\Omega_X^2)\to F^1\to  H^1(X,\Omega_X^1)\to 0.$$
 In particular $g^*|H_{\DR}^2(X)$ has
$\zeta_n\in k$ as an eigenvalue. The corresponding eigenvalue of  $g^*|H_{\crys}^2(X/W)$ must be an $np^r$-th root of unity for some $r$, since $n$ is not divisible by $p$. Then $g^{p^r*}|H_{\crys}^2(X/W)$ has an  $n$-th root of unity as an eigenvalue.
Since $g$ is tame, so does $g^{*}|H_{\crys}^2(X/W)$.
\end{proof}

Recall that for a nonsingular projective variety $Z$ in
characteristic $p>0$, there is an exact sequence of
$\bbQ_l$-vector spaces
\begin{equation}\label{trans}
0\to \NS(Z)\otimes \bbQ_l \to  H_{\rm et}^2(Z,\bbQ_l) \to
T_l^2(Z)\to 0
\end{equation}
where $T_l^2(Z) = T_l(\textup{Br}(Z))$ in the standard notation in
the theory of \'etale cohomology (see \cite{Shioda2}). The Brauer
group $\Br(Z)$ is known to be a birational invariant.

\begin{proposition}\label{diminv}  Let $Z$ be a nonsingular projective variety in
characteristic $p>0$. Let $g$ be an automorphism of $Z$ of finite
order. Assume $l\ne p$. Then the following assertions are true.
\begin{enumerate}
\item  Both traces of $g^*$ on ${\rm NS}(Z)$ and on $T_l^2(Z)$ are integers.
\item  $\rank~{\rm NS}(Z)^g=\rank~{\rm NS}(Z/\langle  g \rangle).$
\item  $\dim H^2_{\rm et}(Z,{\bbQ}_l)^g=\rank~{\rm NS}(Z)^g+\dim T_l^2(Z)^g$.
\item  If the minimal resolution $Y$ of $Z/\langle  g \rangle$ has $T_l^2(Y)=0$, then $$\dim H^2_{\rm et}(Z,{\bbQ}_l)^g=\rank~{\rm NS}(Z)^g.$$
\end{enumerate}
The condition of $(4)$ is satisfied if $Z/\langle g\rangle$ is
rational or is birational to an Enriques surface.
\end{proposition}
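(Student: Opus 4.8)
\emph{Overview.} The plan rests on the observation that the exact sequence \eqref{trans} is functorial, hence $g^*$-equivariant; parts (1)--(3) are then formal consequences of trace bookkeeping and the averaging idempotent of $\langle g\rangle$, and only part (4) carries geometric content.

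\emph{Parts (1)--(3).} For (1), Proposition~\ref{integral}(1) gives $\Tr(g^*\mid H^2_{\rm et}(Z,\bbQ_l))\in\bbZ$; since $\NS(Z)$ is finitely generated and $g^*$ has finite order on it, $g^*$ is represented by an integral matrix on $\NS(Z)$ modulo torsion, so $\Tr(g^*\mid\NS(Z)\otimes\bbQ_l)\in\bbZ$, and additivity of traces in \eqref{trans} forces $\Tr(g^*\mid T_l^2(Z))\in\bbZ$. For (2), with $\pi\colon Z\to W:=Z/\langle g\rangle$ and $d=\#\langle g\rangle$, the standard relations $\pi_*\pi^*=d\cdot\mathrm{id}$ on $\NS(W)\otimes\bbQ$ and $\pi^*\pi_*=\sum_{h\in\langle g\rangle}h^*$ on $\NS(Z)\otimes\bbQ$ exhibit $\pi^*$ as an isomorphism onto $(\NS(Z)\otimes\bbQ)^{g}$, so $\rank\NS(W)=\rank\NS(Z)^{g}$. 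For (3), $\langle g\rangle$ being finite and $\bbQ_l$ of characteristic $0$, the functor of $\langle g\rangle$-invariants is exact (Maschke), so \eqref{trans} yields a short exact sequence $0\to(\NS(Z)\otimes\bbQ_l)^{g}\to H^2_{\rm et}(Z,\bbQ_l)^{g}\to T_l^2(Z)^{g}\to 0$; since the averaging idempotent is already defined over $\bbQ$, $\dim_{\bbQ_l}(\NS(Z)\otimes\bbQ_l)^{g}=\rank\NS(Z)^{g}$, and (3) follows.

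\emph{Part (4).} Here I would prove $T_l^2(Z)^{g}\cong T_l^2(Y)$ for $Y$ the minimal resolution of $W$. Let $V$ be the normalization of $Z\times_W Y$; then $V$ is birational to $Z$, inherits a $\langle g\rangle$-action compatible with the one on $Z$, and $V\to Y$ is the quotient morphism $V\to V/\langle g\rangle$, finite of degree $d$. Passing to a $\langle g\rangle$-equivariant resolution $\tilde Z\to V$ (available for surfaces, which is the relevant case), $\tilde Z$ is smooth projective and $\langle g\rangle$-equivariantly birational to $Z$, so by the birational invariance of the Brauer group, applied equivariantly since the isomorphisms involved are functorial, $T_l^2(\tilde Z)\cong T_l^2(Z)$ as $\langle g\rangle$-modules; moreover the exceptional curves of $\tilde Z\to V$ are algebraic, so $T_l^2(\tilde Z)\cong T_l^2(V)$ compatibly with $\langle g\rangle$. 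Finally, for the finite quotient morphism $q\colon V\to Y$ the transfer relations $q_*q^*=d\cdot\mathrm{id}$ and $q^*q_*=\sum_{h\in\langle g\rangle}h^*$ (valid with $\bbQ_l$-coefficients) give $q^*\colon T_l^2(Y)\xrightarrow{\ \sim\ }T_l^2(V)^{g}$. Combining these, $T_l^2(Z)^{g}\cong T_l^2(Y)$, and substituting into (3) yields $\dim H^2_{\rm et}(Z,\bbQ_l)^{g}=\rank\NS(Z)^{g}+\dim T_l^2(Y)$.

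\emph{The last assertion, and the main difficulty.} If the surface $W$ is rational, then $Y$ is a smooth rational surface with $\dim H^2_{\rm et}(Y,\bbQ_l)=\rank\NS(Y)$, i.e. $T_l^2(Y)=0$; if $W$ is birational to an Enriques surface, then so is $Y$, and again $T_l^2(Y)=0$, since an Enriques surface has $\NS$ of rank $10$ equal to its second Betti number and blowing up only adds algebraic classes (equivalently, $T_l^2$ is a birational invariant of smooth projective surfaces, vanishing for $\bbP^2$ and for Enriques surfaces). In either case the assertion of (4) follows from the displayed formula. The step I expect to be the real obstacle is the identification $T_l^2(Z)^{g}\cong T_l^2(Y)$ in (4): one must verify that $T_l^2$ is insensitive to resolving the quotient singularities of $V$ and that the transfer formula $q^*q_*=\sum_{h}h^*$ genuinely applies to the (mildly singular) quotient morphism $V\to V/\langle g\rangle$ — unproblematic in dimension $2$, and when $p\nmid\#\langle g\rangle$ as in the application, but the only place where more than linear algebra and Proposition~\ref{integral} is needed.
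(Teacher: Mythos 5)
The paper states this proposition without proof (it is quoted as standard, with the sequence \eqref{trans} and the birational invariance of $\Br$ recalled just before it), so there is no in-paper argument to compare against; I can only judge your proposal on its own terms. Parts (1)--(3) are correct and are exactly the expected arguments: integrality of $\Tr(g^*|\NS(Z))$ from an integral basis of $\NS(Z)$ modulo torsion, Proposition~\ref{integral}(1) for $H^2_{\rm et}$, additivity of traces along the $g^*$-equivariant sequence \eqref{trans}, the relations $\pi_*\pi^*=d$ and $\pi^*\pi_*=\sum_{h}h^*$ for the finite quotient map with $\bbQ$-coefficients, and exactness of taking $\langle g\rangle$-invariants over $\bbQ_l$.

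The genuine gap is in part (4), at the step ``the exceptional curves of $\tilde Z\to V$ are algebraic, so $T_l^2(\tilde Z)\cong T_l^2(V)$'' and in the subsequent transfer argument on $T_l^2(V)$. The surface $V$ is singular, and $T_l^2$ has only been defined via \eqref{trans} for nonsingular projective varieties; worse, for a resolution $f\colon \tilde Z\to V$ of a normal surface it is \emph{not} true in general that $H^2_{\rm et}(\tilde Z,\bbQ_l)$ differs from $H^2_{\rm et}(V,\bbQ_l)$ only by classes of exceptional curves: by the Leray spectral sequence the discrepancy (including a possible kernel of $H^2(V,\bbQ_l)\to H^2(\tilde Z,\bbQ_l)$) is controlled by $R^1f_*\bbQ_l$, i.e.\ by $H^1$ of the exceptional fibre, which is nonzero for a non-rational singularity (e.g.\ a cone over an elliptic curve). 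So ``the exceptional curves are algebraic'' is not a justification; the actual content of (4) is precisely that the singularities of $V$ are rational, so that $R^1f_*\bbQ_l=0$ and your bookkeeping (and the identification of $T_l^2(V)$ with $T_l^2(\tilde Z)$, plus the transfer $H^2(Y,\bbQ_l)\cong H^2(V,\bbQ_l)^{g}$) goes through. This can be supplied: $V$ carries a proper birational morphism to the smooth surface $Z$, so its singularities are sandwiched, hence rational in any characteristic, and Artin's structure theory then gives trees of smooth rational exceptional curves; alternatively, in every application in the paper $g$ is tame, so the relevant points are tame quotient singularities. Another route avoids singular models entirely: apply restriction--corestriction for the degree-$d$ Galois extension $k(Y)\subset k(Z)$ to $l$-primary Brauer classes, using that these operations preserve unramified classes and that for smooth projective surfaces the unramified Brauer group is $\Br$; this yields $T_l^2(Z)^{g}=0$ whenever $T_l^2(Y)=0$, which is all that (4) asserts. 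Your treatment of the final sufficient condition (rational or Enriques quotients have $b_2=\rho$ for any smooth birational model, hence $T_l^2(Y)=0$) is fine, as is the equivariant comparison $T_l^2(\tilde Z)\cong T_l^2(Z)$, though the cleanest justification of the latter is the factorization of the $g$-equivariant birational morphism $\tilde Z\to Z$ into point blowups rather than an appeal to birational invariance of $\Br$.
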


The following is well known, see for example Deligne-Lusztig
(Theorem 3.2 \cite{DL}).

\begin{proposition}\label{trace}$($Lefschetz fixed point formula$)$   Let $X$ be a smooth projective variety over  an algebraically closed field $k$ of
characteristic $p> 0$ and let $g$ be a tame automorphism of $X$. Then $X^g={\rm Fix}(g)$ is smooth and
 $$e(g):=e(X^g)=\Tr(g^*|
H^*(X)).$$
\end{proposition}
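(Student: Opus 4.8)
The plan is to treat the two assertions of the statement separately: that the fixed scheme $X^g$ is smooth, and that $e(X^g)=\Tr(g^*|H^*(X))$. The first I would prove directly from tameness; for the second I would appeal to the Lefschetz--Verdier fixed point formula in the tame case — this is what the citation to Deligne--Lusztig provides — or reassemble it from standard pieces as sketched below.

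\emph{Smoothness of $X^g$.} Put $n=\mathrm{ord}(g)$; tameness means $p\nmid n$, so $1/n\in k$ and $k$ contains all $n$-th roots of unity. Fix $x\in X^g$ and work in $\widehat{\calO}_{X,x}$. The action of $g^*$ on $\frakm_x/\frakm_x^2$ is diagonalizable, with eigenvalues $\chi_1,\dots,\chi_d$, $d=\dim X$. Each eigenvector lifts to an exact eigenvector $s_i\in\frakm_x$ by averaging: if $s$ is any lift of the corresponding class, then $g^{*j}(s)\equiv\chi_i^{\,j}s\pmod{\frakm_x^2}$, so $s_i:=\tfrac1n\sum_{j=0}^{n-1}\chi_i^{-j}g^{*j}(s)$ satisfies $g^*(s_i)=\chi_i s_i$ exactly while remaining a lift. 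By Nakayama, $s_1,\dots,s_d$ is a regular system of parameters, whence $\widehat{\calO}_{X,x}\cong k[[s_1,\dots,s_d]]$ with $g^*$ acting diagonally. The ideal defining $X^g$ is generated by $\{g^*(f)-f\}$; since $g^*(s_i)-s_i=(\chi_i-1)s_i$ with $\chi_i-1\in k^\times$ when $\chi_i\neq 1$, that ideal equals $(s_i:\chi_i\neq 1)$, so the completed local ring of $X^g$ at $x$ is $k[[s_i:\chi_i=1]]$; hence $X^g$ is smooth at $x$. As $x$ was arbitrary and $X$ is projective, $X^g$ is a disjoint union of smooth subvarieties.

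\emph{The trace formula.} Write $X^g=\coprod_\beta Y_\beta$ for its connected components, each smooth. The Lefschetz--Verdier trace formula for the automorphism $g$ of the proper smooth $X$ gives
$$\Tr(g^*|H^*(X))=\sum_\beta \mathrm{LT}_\beta(g),$$
with local terms $\mathrm{LT}_\beta(g)\in\bbQ_l$ supported on the $Y_\beta$; it then suffices to show $\mathrm{LT}_\beta(g)=e(Y_\beta)$, since summing yields $\sum_\beta e(Y_\beta)=e(X^g)$. To evaluate a local term I would restrict to a $g$-stable formal (or henselian) neighbourhood of $Y_\beta$: the local linearization above shows $g$ behaves there like the linear action on the normal bundle $N_{Y_\beta/X}$, whose fibrewise action has no nonzero fixed vector because $T_yY_\beta=(T_yX)^g$ by smoothness. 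For a linear automorphism $\varphi$ of $\bbA^r_k$ whose only fixed point is the origin, Lefschetz--Verdier reads $\mathrm{LT}_0(\varphi)=\Tr(\varphi^*|H^*_c(\bbA^r,\bbQ_l))$, and that trace equals $1$: $H^*_c(\bbA^r,\bbQ_l)$ is one-dimensional, concentrated in degree $2r$, with $\varphi$ acting trivially by Poincaré duality against $H^0$. A multiplicativity/fibration argument over $Y_\beta$, together with $\mathrm{LT}_{Y_\beta}(\mathrm{id})=\Tr(\mathrm{id}|H^*(Y_\beta))=e(Y_\beta)$, then gives $\mathrm{LT}_\beta(g)=e(Y_\beta)$.

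\emph{Main obstacle.} The only genuine content is the identification of the Lefschetz--Verdier local terms with the $e(Y_\beta)$: turning the ``linearize, then multiply'' heuristic into a proof requires the deformation invariance and multiplicativity of local terms in \'etale cohomology, which is exactly the package one quotes from the Lefschetz--Verdier/Deligne--Lusztig theory. The smoothness of $X^g$, by contrast, is elementary once tameness is used to linearize the $\langle g\rangle$-action at each fixed point. As an alternative to the étale bookkeeping, one could deduce the whole tame formula from its classical topological counterpart for finite-order maps by lifting each formal fixed-point picture to characteristic zero.
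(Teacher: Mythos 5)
Your proposal is correct and in substance matches the paper, which offers no independent argument but simply cites Deligne--Lusztig (Theorem 3.2 of \cite{DL}); your trace-formula step likewise defers the real content (evaluation of the Lefschetz--Verdier local terms) to that same theory, while your averaging/linearization proof of the smoothness of $X^g$ is the standard argument and is fine.
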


A tame symplectic automorphism $h$ of  a K3 surface has finitely
many fixed points, the number of fixed points $f(h)$ depends only
on the order of $h$ and the list of possible pairs $({\rm ord}(h),
f(h))$ is the same as in the complex case (Theorem 3.3 and
Proposition 4.1 \cite{DK2}): $$({\rm ord}(h),
f(h))=(2,8),\,\,(3,6),\,\,(4,4),\,\,(5,4),\,\,(6,2),\,\,(7,3),\,\,(8,2).$$
Thus by the Lefschetz fixed point formula (Proposition
\ref{trace}), we obtain the following.

\begin{lemma}\label{Lefschetz} Let $h$ be a tame symplectic automorphism of a K3 surface $X$. Then
$h^*|H_{\rm et}^2(X,\bbQ_l)$ has eigenvalues
$$\begin{array}{lll} {\rm ord}(h)=2&:&[h^*]=[1,\, 1.13,\, -1.8]\\
{\rm ord}(h)=3&:&[h^*]=[1,\, 1.9,\, (\zeta_3:2).6]\\
{\rm ord}(h)=4&:&[h^*]=[1,\, 1.7,\, (\zeta_4:2).4,\, -1.6]\\
{\rm ord}(h)=5&:&[h^*]=[1,\, 1.5,\, (\zeta_5:4).4]\\
{\rm ord}(h)=6&:&[h^*]=[1,\, 1.5,\, (\zeta_3:2).4,\, (\zeta_6:2).2,\, -1.4]\\
{\rm ord}(h)=7&:&[h^*]=[1,\, 1.3,\, (\zeta_7:6).3]\\
{\rm ord}(h)=8&:&[h^*]=[1,\,
1.3,\,(\zeta_8:4).2,\,(\zeta_4:2).3,\, -1.4]\end{array}$$ where
the first eigenvalue corresponds to an invariant ample divisor.
\end{lemma}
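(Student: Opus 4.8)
The plan is to determine each list $[h^*]$ from trace data. Set $n={\rm ord}(h)$. Since $h^{n}={\rm id}_X$, the operator $h^{*}$ on $H_{\rm et}^2(X,\bbQ_l)$ satisfies $(h^{*})^{n}={\rm id}$, so it is diagonalizable with eigenvalues $n$-th roots of unity, and by Proposition \ref{integral}(1) its characteristic polynomial is a product of cyclotomic polynomials $\prod_{d\mid n}\Phi_d^{a_d}$. Thus $[h^*]$ is completely determined by the nonnegative integers $a_d$ ($d\mid n$), subject only to $\sum_{d\mid n}a_d\,\phi(d)=22$, and it suffices to compute enough of the traces $\Tr\big((h^{j})^{*}\,|\,H_{\rm et}^2(X,\bbQ_l)\big)$.

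Each power $h^{j}$ is again tame, and it is symplectic since $h$ is; so for $n\nmid j$ the fixed locus $X^{h^{j}}$ consists of $f(h^{j})$ isolated points, where $\big({\rm ord}(h^{j}),f(h^{j})\big)$, with ${\rm ord}(h^{j})=n/\gcd(j,n)$, appears in the table recalled just before the Lemma. Since $H^1=H^3=0$ on a K3 surface and $h^{*}$ acts trivially on $H^0$ and $H^4$, Proposition \ref{trace} yields
$$\Tr\big((h^{j})^{*}\,|\,H_{\rm et}^2(X,\bbQ_l)\big)=e(X^{h^{j}})-2=f(h^{j})-2\qquad(n\nmid j),$$
and the trace equals $22$ when $n\mid j$. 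As the eigenvalue multiset is stable under Galois conjugation (Proposition \ref{integral}(1)), this trace depends only on $\gcd(j,n)$, so it is enough to let $j$ range over the divisors of $n$; this gives exactly $\tau(n)$ linear relations $\sum_{d\mid n}a_d\,c_d(j)=f(h^{j})-2$ (resp.\ $=22$) in the $\tau(n)$ unknowns $a_d$, where $c_d(j)=\sum_{\zeta}\zeta^{j}$ runs over the primitive $d$-th roots of unity.

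It remains to solve these small systems. The matrix $\big(c_d(j)\big)_{d,j\mid n}$ is, after reindexing, the table of rational irreducible characters of $\bbZ/n\bbZ$ evaluated on its rational conjugacy classes, hence invertible, so the $a_d$ are uniquely determined; concretely the system is $2\times 2$ for $n\in\{2,3,5,7\}$, $3\times 3$ for $n=4$, and $4\times 4$ for $n\in\{6,8\}$, and is solved by inspection. For example, when $n=6$ the relations $a_1+a_2+2a_3+2a_6=22$, $a_1-a_2-a_3+a_6=0$, $a_1+a_2-a_3-a_6=4$, $a_1-a_2+2a_3-2a_6=6$ force $(a_1,a_2,a_3,a_6)=(6,4,4,2)$. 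Finally, by Proposition \ref{integral'}(1) the automorphism $h$ fixes an ample class, a genuine $1$-eigenvector; splitting this one copy off from the $a_1$ eigenvalues equal to $1$ puts the answer in exactly the displayed form.

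I do not anticipate a real obstacle: once Proposition \ref{trace}, Proposition \ref{integral}(1), and the classification of the pairs $({\rm ord}(h),f(h))$ are granted, the argument is bookkeeping, essentially the complex-case computation transported to characteristic $p$ via tameness. The only point requiring care is the uniqueness of the solution of each linear system --- equivalently, that the traces of the divisor-powers already force the cyclotomic decomposition of $h^{*}|H_{\rm et}^2(X,\bbQ_l)$ --- which amounts to the invertibility of the rational character table of a cyclic group and can otherwise be verified directly in each of the finitely many cases.
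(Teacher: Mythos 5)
Your argument is correct and is exactly the route the paper intends: the paper derives the lemma from the table of pairs $({\rm ord}(h),f(h))$ of \cite{DK2} together with Proposition \ref{trace}, and your write-up simply makes explicit the bookkeeping (integrality of the characteristic polynomial forcing a cyclotomic decomposition, traces of the powers $h^j$ computed from the fixed-point counts, and the resulting invertible linear systems), with Proposition \ref{integral'}(1) supplying the invariant ample class. No gap; the verification of each small system matches the displayed eigenvalue lists.
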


We need the following information on a special
involution of a K3 surface.

\begin{lemma}\label{nsym2}  
Let $X$ be a K3 surface in characteristic $p\neq 2$.
Assume that $h$ is an automorphism of order $2$ with $\dim
H^2_{\rm et}(X,{\bbQ}_l)^h =2$. Then $h$ is non-symplectic and has
an $h$-invariant elliptic fibration $\psi:X\to {\bf P}^1$,  $$X/\langle h\rangle\cong {\bf F}_e$$ a rational ruled surface, and $X^h$ is either a curve of genus
$9$ which is a $4$-section of $\psi$ or the union of a section and a curve of genus $10$ which
is a $3$-section.
In the first case $e=0, 1$ or $2$, and in the second $e=4$. Each singular fibre of $\psi$
is of type $I_1$ $($nodal$)$, $I_2$, $II$ $($cuspidal$)$ or $III$, and is intersected by $X^h$ at the node and two smooth points if  of type $I_1$, at the two singular points if of type $I_2$, at the cusp with multiplicity $3$ and a smooth point if of type $II$, at the singular point tangentially to both components if of type $III$. If $X^h$ contains a section, then each singular fibre
is of type $I_1$ or $II$.
\end{lemma}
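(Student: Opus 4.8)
The plan is to pass to the quotient surface $Y=X/\langle h\rangle$ and study $\pi:X\to Y$ as a double cover. First I would show $h$ is non-symplectic: if $h$ were symplectic, Lemma~\ref{Lefschetz} would force $\dim H^2_{\rm et}(X,\bbQ_l)^h=14$, contrary to hypothesis. Hence $h^*$ acts by $-1$ on $H^0(X,\Omega^2_X)$, so $[h^*]=[1.2,\,(-1).20]$, and Proposition~\ref{trace} gives $e(X^h)=\Tr(h^*|H^*(X))=2+(2-20)=-16$. Since $h^*\omega_X=-\omega_X$ with $\omega_X$ nowhere zero, at each fixed point the action on the tangent space is diagonalizable with eigenvalues $\{1,-1\}$ (the case $\{-1,-1\}$ is excluded, as it would make $h$ act trivially on $\Omega_X^2$ there). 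Therefore $X^h$ is a disjoint union of smooth curves, nonempty because $e(X^h)\neq 0$, with $\sum_i(2-2g(C_i))=-16$.

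Since $X^h$ has pure codimension one, $Y$ is a smooth projective surface and $\pi$ is a double cover branched along the smooth curve $B=\pi(X^h)\cong X^h$. From $\pi_*\calO_X=\calO_Y\oplus\calO_Y(-L)$ with $B\in|2L|$, together with $K_X=0=\pi^*(K_Y+L)$, one gets $L\sim -K_Y$, so $B\in|-2K_Y|$ is effective and nonzero; also $e(Y)=\tfrac12\big(e(X)+e(X^h)\big)=4$ by Euler characteristic additivity along $\pi$, and $H^1(Y,\bbQ_l)\hookrightarrow H^1(X,\bbQ_l)=0$. Because $-2K_Y$ is effective and nonzero, $K_Y\cdot H<0$ for any ample $H$, so $\kappa(Y)=-\infty$; with $b_1(Y)=0$, the classification of surfaces forces $Y$ rational, and then $b_2(Y)=e(Y)-2=2$, so $Y$ is a Hirzebruch surface ${\bf F}_e$.

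Composing the ruling ${\bf F}_e\to{\bf P}^1$ with $\pi$ gives an $h$-invariant morphism $\psi:X\to{\bf P}^1$ (as $h$ is the covering involution of $\pi$); a general fibre is the double cover of ${\bf P}^1$ branched at $B\cdot F=-2K_Y\cdot F=4$ points, hence an elliptic curve, and every fibre $\pi^{-1}(F_t)$ is connected since $\deg(L|_{F_t})=2$. Writing $B\sim 4C_0+2(e+2)F$ with $C_0^2=-e$, smoothness of $B$ (equivalently of $X$) forces $e\in\{0,1,2\}$ or $e=4$: since $B\cdot C_0=4-2e$, the section $C_0$ occurs in $B$ once $e\ge 3$, with multiplicity $\ge 2$ (impossible, $B$ reduced) once $e\ge 5$, producing a nodal $B$ when $e=3$ (impossible), and forcing $B=C_0\sqcup B'$ with $B'\sim 3C_0+12F$ smooth and disjoint from $C_0$ when $e=4$; for $e\le 2$ one has $C_0\not\subset B$ and a connectedness count shows $B$ irreducible. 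Adjunction gives $g(B)=-K_Y\cdot B/2+1=9$ in the irreducible case ($e\in\{0,1,2\}$: $X^h$ a genus-$9$ $4$-section), while for $e=4$ the lift $\pi^{-1}(C_0)\cong{\bf P}^1$ is a section of $\psi$ and $g(B')=10$ with $B'$ a $3$-section; both possibilities match $e(X^h)=-16$.

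Finally, since $F_t$ and $B$ are both smooth and $F_t\not\subset B$, the fibre $\psi^{-1}(t)=\pi^{-1}(F_t)$ is singular only when $B$ meets $F_t$ non-transversally; it is reduced and of arithmetic genus $1$, and $\psi$ is relatively minimal because $K_X=0$ rules out $(-1)$-curves. Reading the local model $z^2=(\text{equation of }B)$ of the double cover over a contact point of order $m$, together with connectedness and the at-most-two-components constraint (a degree-$2$ cover of ${\bf P}^1$), one finds contact types $2{+}1{+}1$, $2{+}2$, $3{+}1$, $4$ give fibres of type $I_1$, $I_2$, $II$, $III$ respectively, and no other Kodaira type can occur since the rest have $\ge 3$ components. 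In the case $e=4$, $C_0$ is transverse to every $F_t$ and $B'\cdot F_t=3$, so only the $B'$-contact types $2{+}1$ and $3$ arise, giving only $I_1$ and $II$. The same local models yield the intersection of $X^h=\{z=0\}$ with a singular fibre: multiplicity $m$ at a contact point of order $m$ (the node, resp.\ the two nodes, resp.\ the cusp, resp.\ the tacnode, tangent to both components) and transversally at each simple branch point, which is exactly the asserted list. The main obstacle is the bookkeeping in the third step — extracting the precise list $e\in\{0,1,2,4\}$, the number of branch components, and their classes from smoothness of $X$ — and the case analysis in the last step separating $I_1$ from $I_2$ and $II$ from $III$ using the double-cover equation and the arithmetic-genus-one constraint.
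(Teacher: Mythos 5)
Your strategy is sound and runs largely parallel to the paper's: non-symplecticity from the eigenvalue count via Lemma \ref{Lefschetz}, identification of the quotient with a Hirzebruch surface, the branch curve lying in $|-2K_{{\bf F}_e}|$, and the fibre types read off from local double-cover models. Two ingredients differ. You obtain $b_2(X/\langle h\rangle)=2$ from $e(Y)=\tfrac12\bigl(e(X)+e(X^h)\bigr)=4$ together with $b_1=0$, $P_2=0$ and Castelnuovo's criterion, whereas the paper gets Picard number $2$ directly from Proposition \ref{diminv}; and you pin down the two configurations ($e\le 2$ with an irreducible genus-$9$ $4$-section, or $e=4$ with $C_0$ plus a disjoint genus-$10$ $3$-section) downstairs, from smoothness of $B\in|-2K_{{\bf F}_e}|$, whereas the paper argues upstairs that $X^h$ has at most two components (since $\dim H^2_{\rm et}(X,\bbQ_l)^h=2$) and then computes the image classes on ${\bf F}_e$ to bound $e$. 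Both routes are valid, and your write-up has the merit of making explicit the fibre-type and intersection assertions that the paper's proof leaves almost entirely implicit behind the remark that each singular fibre has at most two components.

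There is, however, one genuine gap: characteristic $3$, which the statement allows ($p\neq 2$ only) and which the paper treats by a separate argument. You assert that a general fibre of $\psi$ is ``the double cover of ${\bf P}^1$ branched at $B\cdot F=4$ points, hence an elliptic curve,'' but in characteristic $3$ it is not automatic that a general ruling fibre meets $B$ in four \emph{distinct} points: if the projection $B\to{\bf P}^1$ were inseparable on some component, every fibre would meet $B$ non-transversally and the genus-one fibration could be quasi-elliptic, with every fibre cuspidal --- exactly the possibility the paper excludes (its argument: the curve of cusps would be a smooth rational curve fixed pointwise by $h$, incompatible with the structure of $X^h$). Within your framework the gap is easy to close: the inseparable degree of a component of $B$ over ${\bf P}^1$ is a power of $3$ dividing its degree, so the degree-$4$ and degree-$1$ components are separable, and a degree-$3$ component could only be \emph{purely} inseparable over ${\bf P}^1$, forcing it to be rational and contradicting genus $10$; hence $B\to{\bf P}^1$ is separable on every component, a general fibre meets $B$ transversally, and $\psi$ is elliptic. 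But as written this case is skipped, and it must be addressed for the lemma as stated.
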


\begin{proof} Since $\dim H^2_{\rm et}(X,{\bbQ}_l)^h=2$, the eigenvalues of
$h^*|H^2_{\rm et}(X,{\bbQ}_l)$ must be $$[h^*]=[1.2,\,-1.20], \,\,{\rm so}\,\,\,\Tr
(h^*|H^*(X))=-16.$$ By Lemma \ref{Lefschetz},
$h$ is non-symplectic, thus $X^h$  is a disjoint union of smooth
curves and the quotient $X/\langle h\rangle$ is a nonsingular
rational surface.  By Proposition
\ref{diminv}, $X/\langle h\rangle$ has Picard number 2, hence is
isomorphic to a rational ruled surface ${\bf F}_e$.  Note that $e(X^h)=-16$, so $X^h$ is non-empty
and has at most 2 components. Thus $X^h$ is either a curve $C_9$ of
genus 9 or the union of two curves $C_0$ and $C_{10}$ of genus 0 and 10, respectively. 
 In the first case, the image $C_9'\subset {\bf F}_e$ of $C_9$ satisfies $C_9'^2=32$ and $C_9'K=-16$, hence $C_9'\equiv 4S_0+(4+2e)F$, where $S_0$ is the
section with $S_0^2=-e$, and $F$ a fibre of ${\bf F}_e$. Since $S_0C_9'\ge 0$, we have $e\le 2$.
In the second case, the image $C_0'$ of $C_0$ has $C_0'^2=-4$, hence $C_0'=S_0$ and $e=4$, then it is easy to see that  $C_{10}'\equiv 3(S_0+4F)$.

In characteristic $p\neq 3$ the pull-back of
the ruling on ${\bf F}_e$ gives an $h$-invariant elliptic fibration
$\psi:X\to {\bf P}^1$. Each singular fibre has at most 2 components since it is the pull-back of a fibre of ${\bf F}_e$.

In characteristic $p=3$ we have to show that the pull-back is not a quasi-elliptic fibration. Suppose it is. The closure of the cusps of irreducible fibres is a smooth rational curve and must be fixed pointwise by $h$, then the genus 10 curve must be a section of the quasi-elliptic fibration, impossible. 
\end{proof}

The following easy lemmas also will be used frequently.

\begin{lemma}\label{fix} Let $S$ be a set and ${\rm Aut}(S)$ be the group of bijections of $S$. For any $g\in {\rm Aut}(S)$ and positive integers $a$ and $b$,
\begin{enumerate}
\item ${\rm Fix}(g)\subset {\rm Fix}(g^a)$;
\item ${\rm Fix}(g^a)\cap {\rm Fix}(g^b)={\rm Fix}(g^d)$ where $d=\gcd (a, b)$;
\item ${\rm Fix}(g)= {\rm Fix}(g^a)$ if ${\rm ord}(g)$ is finite and  prime to $a$.
\end{enumerate}
\end{lemma}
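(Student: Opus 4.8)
The plan is to prove the three assertions in order, using only the group structure of $\langle g\rangle$ acting on $S$; the single point worth keeping in mind throughout is that, since $g$ is a bijection, $g^{-1}$ exists, so negative powers of $g$ are available and Bézout's identity can be applied to the exponents.

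For (1), if $s\in{\rm Fix}(g)$, then $g(s)=s$, and an immediate induction on $a$ gives $g^a(s)=s$, so $s\in{\rm Fix}(g^a)$. Applying this with $g$ replaced by $g^d$ shows more generally that ${\rm Fix}(g^d)\subset{\rm Fix}(g^{dm})$ for every positive integer $m$; in particular, since $d=\gcd(a,b)$ divides both $a$ and $b$, we obtain ${\rm Fix}(g^d)\subset{\rm Fix}(g^a)\cap{\rm Fix}(g^b)$, which is the easy inclusion of (2).

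For the reverse inclusion in (2), choose integers $u,v$ (of either sign) with $ua+vb=d$. If $s\in{\rm Fix}(g^a)\cap{\rm Fix}(g^b)$, then $g^a(s)=s$ and $g^b(s)=s$, hence also $g^{-a}(s)=s$ and $g^{-b}(s)=s$ (here invertibility of $g$ is used), and therefore $g^d(s)=g^{ua+vb}(s)=(g^a)^u(g^b)^v(s)=s$, so $s\in{\rm Fix}(g^d)$. This completes the proof of (2).

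Finally, (3) will be deduced from (2): if $n:={\rm ord}(g)$ is finite, then $g^n={\rm id}$, so ${\rm Fix}(g^n)=S$; taking $b=n$ in (2) and using $\gcd(a,n)=1$ yields ${\rm Fix}(g^a)={\rm Fix}(g^a)\cap{\rm Fix}(g^n)={\rm Fix}(g^{\gcd(a,n)})={\rm Fix}(g)$. (Equivalently, one can rerun the Bézout argument of (2) directly with $ua+vn=1$.) As the paper already flags, this is an easy lemma and there is no genuine obstacle; the only thing to be careful about is not to forget the invertibility of $g$ when passing to negative exponents, without which the Bézout step would fail.
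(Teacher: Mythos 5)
Your proof is correct, and since the paper states this lemma without proof (as an easy fact), your Bézout-plus-invertibility argument is exactly the standard justification the author leaves to the reader. Nothing is missing: the induction for (1), the two inclusions in (2) with the explicit use of $g^{-1}$ to allow negative exponents, and the deduction of (3) from (2) with $b={\rm ord}(g)$ are all sound.
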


\begin{lemma}\label{sum} Let $R(n)$ be the sum of all primitive $n$-th root of unity in $\overline{\bbQ}$ or in
$\overline{\bbQ_l}$. Then
$$R(n)=\left\{\begin{array}{ccl} 0&{\rm if}& n\,{\rm has\,\, a\,\, square\,\, factor},\\
(-1)^t&{\rm if}& n\,{\rm is\,\, a\,\, product\,\, of}\,\,t\,\,{\rm distinct\,\, primes}.\\
\end{array} \right.$$
\end{lemma}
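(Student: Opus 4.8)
The plan is to show that $R(n)$ coincides with the M\"obius function $\mu(n)$, for which the two displayed cases are exactly the definition. First I would observe that $R(n)$ does not depend on the field: since $\overline{\bbQ}$ embeds into $\overline{\bbQ_l}$ and $\Gal(\overline{\bbQ}/\bbQ)$ permutes the primitive $n$-th roots of unity among themselves, $R(n)$ is a Galois-invariant algebraic integer, hence a rational integer, and it equals the value obtained by taking $\zeta_n=e^{2\pi i/n}\in\bbC$. So it suffices to argue over $\bbC$.

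Next I would isolate two elementary facts. First, for $n>1$ the sum of \emph{all} $n$-th roots of unity vanishes (a nontrivial finite geometric series), while for $n=1$ it is $1$; grouping the $n$-th roots of unity according to their exact order $d\mid n$ then gives the identity $\sum_{d\mid n}R(d)=1$ if $n=1$ and $0$ otherwise. Second, when $\gcd(m,n)=1$ the assignment $(\zeta,\eta)\mapsto\zeta\eta$ is a bijection from the primitive $m$-th roots of unity times the primitive $n$-th roots of unity onto the primitive $mn$-th roots of unity (Chinese Remainder Theorem for $\bbZ/mn\bbZ$), so summing over the two factors yields the multiplicativity $R(mn)=R(m)R(n)$.

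From the divisor-sum identity applied to prime powers one gets $R(1)=1$, $R(p)=-1$ (since $1+R(p)=0$), and $R(p^k)=0$ for $k\ge 2$ (the divisor sums for $p^k$ and $p^{k-1}$ both vanish and differ only by the term $R(p^k)$). Writing $n=p_1^{a_1}\cdots p_t^{a_t}$ and iterating multiplicativity gives $R(n)=\prod_{i=1}^t R(p_i^{a_i})$, which is $0$ whenever some $a_i\ge 2$, i.e. when $n$ has a square factor, and equals $(-1)^t$ when $n$ is squarefree with $t$ prime factors (the case $n=1$ being the empty product $1=(-1)^0$).

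There is no substantive obstacle in this lemma; the only points deserving a moment's attention are checking that the CRT map in the second fact restricts to a genuine bijection on the \emph{primitive} roots and the field-independence of $R(n)$, which is precisely why I reduce to $\bbC$ at the outset. An equally short alternative bypasses the prime-power bookkeeping and deduces $R(n)=\mu(n)$ in one step by M\"obius inversion of the identity $\sum_{d\mid n}R(d)=[n=1]$.
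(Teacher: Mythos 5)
Your argument is correct and complete: the reduction to $\bbC$ (or rather to the statement $R(n)=\mu(n)$ over any field of characteristic zero), the divisor-sum identity $\sum_{d\mid n}R(d)=[n=1]$, multiplicativity via the CRT bijection on primitive roots, and the prime-power evaluation together give exactly the claimed formula, and the Möbius-inversion shortcut you mention works just as well. The paper states this lemma as a standard fact and offers no proof at all, so there is nothing to compare against; what you have written is the usual textbook verification that the sum of primitive $n$-th roots of unity is $\mu(n)$, and no gap remains.
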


The following lemma will play a key role in our proof.

\begin{lemma}\label{60} Let $g$ be an automorphism of order $60$ of a K3
surface in characteristic $p\neq 2$, $3$, $5$. If $$[g^{*}]=[1,\,\zeta_{60}:16,\,\zeta_{12}:4,\,\pm 1],$$ then
\begin{enumerate}
\item there is a $g^{}$-invariant elliptic
fibration $\psi:X\to {\bf P}^1$ with $12$ cuspidal fibres, say $F_{\infty}$, $F_0$, $F_{t_1}, \ldots, F_{t_{10}}$;
\item  ${\rm Fix}(g^{30})$
consists of a section $R$ of $\psi$ and a curve $C_{10}$ of genus
$10$ which is a $3$-section passing through each cusp with multiplicity $3$;
\item the action of $g$ on the base ${\bf P}^1$ is of order $10$, fixing $2$ points, say  $\infty$ and $0$, and makes the $10$ points $t_1, \ldots, t_{10}$ to form a single orbit;
\item ${\rm Fix}(g^{10})=R\cup\,\{{\rm the\,
cusps\, of\, the\, 12\, cuspidal\, fibres}\};$
\item ${\rm Fix}(g^{12})={\rm Fix}(g)$ and it  consists of the  
$4$ points,  $$R\cap F_{\infty},\,\,R\cap F_{0},\,\,C_{10}\cap F_{\infty},\,\,C_{10}\cap F_{0};$$
\item $[g^{*}]=[1,\,\zeta_{60}:16,\,\zeta_{12}:4,\, 1].$
\end{enumerate}
\end{lemma}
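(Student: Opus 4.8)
The plan is to study $h:=g^{30}$, use Lemma~\ref{nsym2} to extract an elliptic fibration, and then bootstrap all six assertions from the eigenvalue lists $[g^{k*}]$ and the Lefschetz formula (Proposition~\ref{trace}); note that $g$ and all its powers are tame since $p\nmid 60$. Raising the entries of $[g^*]$ to the $30$-th power turns every primitive $60$-th and every primitive $12$-th root of unity into $-1$ and leaves $1$ and $\pm1$ equal to $1$, so $[h^*]=[1.2,\,-1.20]$ and $\dim H^2_{\rm et}(X,\bbQ_l)^h=2$. By Lemma~\ref{nsym2}, $h$ is a non-symplectic involution, $X/\langle h\rangle\cong{\bf F}_e$ is rational, there is an $h$-invariant elliptic fibration $\psi:X\to{\bf P}^1$ (the pull-back of the ruling of ${\bf F}_e$, so $h$ acts trivially on the base and preserves each fibre) with singular fibres only of types $I_1,I_2,II,III$, and $X^h$ is either a smooth genus-$9$ curve $C_9$ (a $4$-section) or the disjoint union $R\sqcup C_{10}$ of a section $R$ and a smooth genus-$10$ curve $C_{10}$ (a $3$-section).

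The crucial step is to exclude $X^h=C_9$. Suppose it holds. By Lemma~\ref{fix}, both ${\rm Fix}(g^2)$ and ${\rm Fix}(g^{10})$ lie in ${\rm Fix}(g^{30})=C_9$, hence equal the fixed loci of the restrictions $g^2|_{C_9},\,g^{10}|_{C_9}$; since $(g^2)^{15}=h$ fixes $C_9$ pointwise, $d:={\rm ord}(g^2|_{C_9})$ divides $15$. Computing traces (Lemma~\ref{sum}) gives $e(g^2)=4$ and $e(g^{10})=14$. If $d=1$ then ${\rm Fix}(g^2)=C_9$, contradicting $e(C_9)=-16$; if $d=5$ then $g^{10}=(g^2)^5$ fixes $C_9$ pointwise, so ${\rm Fix}(g^{10})=C_9$ and $e(g^{10})=-16\neq14$; if $d=3$ then $g^2|_{C_9}$ has exactly $4$ fixed points, but Riemann--Hurwitz reads $16=3(2g'-2)+2\cdot4$, impossible; if $d=15$ then those $4$ points are totally ramified and $16\geq15(2g'-2)+4\cdot14$, impossible. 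Hence $X^h=R\sqcup C_{10}$, which gives assertion~(2) (the multiplicity-$3$ contact at the cusps is the content of Lemma~\ref{nsym2} for type-$II$ fibres, carried by $C_{10}$ since $R$ meets such a fibre at its smooth point, and applies once~(1) is proved below); and $g$, commuting with $h$, preserves each of $R$ and $C_{10}$ because they have different genera.

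Next I would show $g$ preserves $\psi$ and deduce~(6), (1), (3). Since $X/\langle h\rangle$ is rational, Proposition~\ref{diminv} gives ${\rm rank}\,\NS(X)^h=2$; with $F$ the fibre class, $F\cdot[R]=1$, $[R]^2=-2$, the lattice $\langle F,[R]\rangle$ is unimodular, so $\NS(X)^h=\langle F,[R]\rangle$, whose primitive isotropic classes are $\pm F$ and $\pm(F+[R])$, and only $F$ is nef (as $(F+[R])\cdot[R]=-1$). Since $g^*F$ is a primitive isotropic nef class of $\NS(X)^h$, it equals $F$; so $g$ preserves $\psi$, acts on the base ${\bf P}^1$, and (as $h$ is trivial there) with order $d_0\mid30$. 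As $g$ fixes $F$ and $[R]$, $\NS(X)^g\supseteq\langle F,[R]\rangle$ has rank $\geq2$, hence $\dim H^2_{\rm et}(X,\bbQ_l)^g=2$ and $[g^*]=[1,\zeta_{60}:16,\zeta_{12}:4,1]$, giving~(6) and $e(g)=4$. Now $e(g^{10})=14$ is too big for ${\rm Fix}(g^{10})$ to lie in two fibres: if $g^{10}$ acted nontrivially on the base it would fix at most two fibres, on each of which it fixes $R\cap F$ and thus acts as a group automorphism of a smooth fibre with at most $4$ fixed points (a singular fibre cannot sit in the smooth locus ${\rm Fix}(g^{10})$), so $e(g^{10})\leq8<14$; hence $g^{10}$ is trivial on the base, $d_0\mid10$. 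Then $g^{10}|_{E_\eta}$ is a group automorphism of the generic fibre $E_\eta$ with $(g^{10}|_{E_\eta})^3=h|_{E_\eta}$, and $h|_{E_\eta}$ is a nontrivial group involution (else $h={\rm id}$), i.e. $[-1]$, so $g^{10}|_{E_\eta}$ has order $6$ and $j(E_\eta)=0$; thus $\psi$ has Weierstrass equation $y^2=x^3+b(t)$ with ${\rm div}(b)$ of degree $12$, and case~B forbids fibres other than $I_1,II$, so $b$ has $12$ simple zeros and all $12$ singular fibres are cuspidal of type $II$ — assertion~(1). Finally $d_0\in\{1,2,5,10\}$ and $d_0<10$ is impossible (using $j(E_\eta)=0$): for $d_0\in\{1,2\}$ a power of $g$ ($g^6$, resp. $g^{12}$) would be trivial on both $E_\eta$ and the base, hence the identity; for $d_0=5$, $(g^5|_{E_\eta})^6={\rm id}\neq[-1]=g^{30}|_{E_\eta}$. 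So $d_0=10$: $g$ acts on ${\bf P}^1$ as an order-$10$ rotation with two fixed points (call them $\infty,0$), every other orbit of size $10$, and the $12$ cuspidal base points must split as $10+1+1$ — assertion~(3), with $F_\infty,F_0$ cuspidal.

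For the rest: ${\rm Fix}(g^{10})\subseteq X^h=R\sqcup C_{10}$; as $g^{10}$ is trivial on the base and $R$ maps isomorphically onto it, $g^{10}|_R={\rm id}$, so $R\subseteq{\rm Fix}(g^{10})$; and $g^{10}|_{C_{10}}$ has order dividing $3$ (since $(g^{10})^3=h$ fixes $C_{10}$) and is nontrivial, for $C_{10}\subseteq{\rm Fix}(g^{10})$ would give $e(g^{10})=e(R)+e(C_{10})=-16$ using $R\cap C_{10}=\varnothing$; hence $g^{10}|_{C_{10}}$ has exactly $14-2=12$ fixed points, which must be the $12$ cusps (each the unique singular point of a fibre preserved by $g^{10}$, and each on $C_{10}$) — assertion~(4). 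Lastly $g^{12}$ acts on the base with order $10/\gcd(10,12)=5$, fixing only $\infty,0$, so ${\rm Fix}(g^{12})\subseteq F_\infty\cup F_0$; on $F_\infty$ it fixes $R\cap F_\infty$ and the cusp, hence induces a scaling of the normalization ${\bf P}^1$ of $F_\infty$, necessarily nontrivial because ${\rm Fix}(g^{12})$ is smooth while $F_\infty$ is a cuspidal cubic; therefore ${\rm Fix}(g^{12})\cap F_\infty=\{R\cap F_\infty,\ C_{10}\cap F_\infty\}$, and likewise over $0$, so ${\rm Fix}(g^{12})$ is the stated four-element set; since ${\rm Fix}(g)\subseteq{\rm Fix}(g^{12})$ and $e(g)=4$, this inclusion is an equality — assertion~(5). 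The main obstacle is the paragraph excluding $X^h=C_9$: in the complex case one uses the transcendental lattice, unavailable here, so one must instead trap ${\rm Fix}(g^2)$ and ${\rm Fix}(g^{10})$ inside $C_9$ via Lemma~\ref{fix} and derive a contradiction from $e(g^2)=4$, $e(g^{10})=14$ and Riemann--Hurwitz; a recurring secondary point is that a cuspidal fibre, being singular, cannot lie in the smooth fixed locus of a tame automorphism, which is what pins ${\rm Fix}(g^{10})$ and ${\rm Fix}(g^{12})$ down exactly.
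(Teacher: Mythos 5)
Your argument is correct, and while it shares the paper's skeleton (apply Lemma \ref{nsym2} to $h=g^{30}$, rule out the genus-$9$ case, then exploit the Lefschetz numbers $e(g^2)=4$, $e(g^{10})=14$), several of your intermediate steps take a genuinely different route. The paper dismisses $X^h=C_9$ in one line ($g^{10}|_{C_9}$ would be an order-$3$ automorphism of a genus-$9$ curve with $14$ fixed points), whereas you run a case analysis on ${\rm ord}(g^2|_{C_9})\in\{1,3,5,15\}$ with Riemann--Hurwitz --- longer but equivalent. To see that $g$ preserves $\psi$, the paper uses $X/\langle h\rangle\cong{\bf F}_4$ and the uniqueness of its ruling; you instead identify $\NS(X)^h$ with the hyperbolic plane $\langle F,[R]\rangle$ and use that $F$ is its unique primitive isotropic nef class, which has the bonus of yielding assertion (6) immediately (rank~$\NS(X)^g\ge 2$ forces the last eigenvalue to be $+1$), where the paper only obtains (6) at the very end from $e(g)=\#{\rm Fix}(g)=4$. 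Most substantially, the paper gets ``no nodal fibres, twelve cuspidal ones'' from an orbit analysis on the base points of the singular fibres (no orbits of length $3,5,6,15$, using $g^2$, $g^5$, $g^6$) combined with $2+a+b=e(g^{10})=14$; you instead show $g^{10}$ is trivial on the base by the bound $e(g^{10})\le 8<14$, deduce that $g^{10}$ acts on the generic fibre as a group automorphism of order $6$, hence $j(E_\eta)=0$ and a Weierstrass form $y^2=x^3+b(t)$, and then the fibre-type restriction of Lemma \ref{nsym2} (only $I_1$, $II$ when $X^h$ contains a section, and $I_1$ cannot occur for constant $j=0$) forces twelve type-$II$ fibres; the order-$10$ base action then drops out of ${\rm Aut}(E_\eta,O)\cong\mu_6$ rather than from the paper's orbit count. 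In effect you anticipate the complex-multiplication structure that the paper only invokes later, in the proof of Theorem \ref{main}, at the price of bringing in the Weierstrass/Kodaira classification earlier. The one assertion you leave unjustified is that $g^{10}|_{E_\eta}$ has order $6$ rather than $2$: if $g^{10}|_{E_\eta}=[-1]=g^{30}|_{E_\eta}$, then $g^{20}$ would act trivially on both the base and the generic fibre, hence $g^{20}={\rm id}$, contradicting ${\rm ord}(g)=60$ --- a one-line fix, not a gap in the method.
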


\begin{proof}
Note that
$[g^{30*}]=[1,\,-1.16,\,-1.4,\, 1],$ and
$$[g^{10*}]=[1,\,(\zeta_{6}:2).8,\,(\zeta_{6}:2).2,\, 1],\quad e(g^{10})=14.$$
Thus, we can apply Lemma \ref{nsym2} to $h=g^{30}$. Since $${\rm Fix}(g^{d})\subset {\rm Fix}(g^{30})$$ for any $d$ dividing 30, we see that ${\rm Fix}(g^{10})$ consists of 14 points if ${\rm Fix}(g^{30})$ is irreducible. If ${\rm Fix}(g^{30})$ is a curve $C_{9}$ of genus $9$, then $g^{10}$ acts on $C_9$ with 14 fixed points,
too many for an order 3 automorphism.  Thus ${\rm Fix}(g^{30})$ consists of a section $R$ of a
$g^{30}$-invariant elliptic fibration $$\psi:X\to {\bf P}^1$$ and
a curve $C_{10}$ of genus $10$ which is a $3$-section.  We know that 
$$X/\langle g^{30} \rangle\cong {\bf F}_4$$ a rational ruled surface. Every automorphism of ${\bf F}_4$, hence the one induced by $g$, preserves
the unique ruling, so $g$ preserves the elliptic fibration. Let $a$ and $b$ be the number of singular fibres of type $I_1$ and $II$ respectively. Then 
$$a+2b=e(X)=24, \quad 12\le a+b\le 24.$$ Note that
$g^{30}$ acts trivially on the base ${\bf P}^1$. Neither $g^5$ nor $g^6$ acts trivially on ${\bf P}^1$. Otherwise, ${\rm Fix}(g^{5})$ or ${\rm Fix}(g^{6})$ must contain the section $R$, the nodes of the nodal fibres and the cusps of the cuspidal fibres, too many, as we compute $e(g^{5})=3\pm 1$ and $e(g^{6})=4$. Our automorphism $g$ acts on the set of the base points of the $a+b$ singular fibres. An orbit of this action has length 1, 2, 3, 5, 6, 10 or 15, i.e. a divisor of 30. If an orbit has length 3, 5 or 6, then $g^5$ or $g^6$ fixes all points in the orbit, hence acts trivially on the base ${\bf P}^1$. Thus no orbit has length 3, 5, 6. If an orbit has length 15, then $a\ge 16$ and $g^2$ fixes more than two points on the base ${\bf P}^1$. We have proved that every orbit has length 1, 2, or 10. Then $g^{10}$ fixes all base points of the singular fibres. Thus it acts trivially on the base ${\bf P}^1$ and ${\rm Fix}(g^{10})$ contains $R$ and the nodes  and the cusps of the singular fibres. Since $ e(g^{10})=14$, we infer that $a=0$ and $b=12$. Then the action of $g$ on the 12 base points of the cuspidal fibres has an orbit of length 10; otherwise $g^2$ would act trivially on the base. If the remaining two points, say $\infty$ and 0, are interchanged by $g$, then $g$ fixes 2 points on the base ${\bf P}^1$ away from the 12 points, then $g^2$ fixes 4 points on the base, so acts trivially on the base. Thus $g$ fixes $\infty$ and 0.
This proves (1), (2) and (3). 

The statement (4) follows from (3) and the fact that ${\rm Fix}(g^{10})$ has Euler number 14 and is contained in $R\cup C_{10}$.

By (3) ${\rm Fix}(g)$  consists of the  
$4$ points, hence $e(g)=\Tr(g^*|H^*(X))=4$.
Again, by (3) ${\rm Fix}(g^{12})$ is a subset of $F_{\infty}\cup F_{0}$. Since $e(g^{12})=4$, ${\rm Fix}(g^{12})$ cannot contain any point other than the 4 points of ${\rm Fix}(g)$. This proves (5) and (6). 
\end{proof}

\section{Proof: the Tame Case}

Throughout this section, we assume that the characteristic $p>0$,
$p\neq 2$, 3, 5 and $g$ is an automorphism of order $60$ of a K3
surface. We first prove that $g$ cannot be purely non-symplectic.

\begin{lemma}\label{1.60} ${\rm  ord}(g)\neq 1.60$.
\end{lemma}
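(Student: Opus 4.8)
## Proof strategy for Lemma 2.10 (${\rm ord}(g)\neq 1.60$)

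The plan is to argue by contradiction: assume $g$ is purely non-symplectic of order $60$, so the eigenvalue of $g^*$ on $H^0(X,\Omega_X^2)$ is a primitive $60$th root of unity $\zeta_{60}$. By Proposition \ref{integral'}(2), $g^*|H^2_{\rm et}(X,\bbQ_l)$ then has $\zeta_{60}$, hence (Proposition \ref{integral}(1)) all $16$ primitive $60$th roots of unity, as eigenvalues. That already uses up $16$ of the $22$ slots, leaving only $6$ eigenvalues to be distributed among roots of unity of order dividing $60$ but not equal to $60$; integrality forces these to come in full Galois orbits, so the admissible remaining patterns are severely constrained — e.g.\ $\zeta_{12}$ contributes $\phi(12)=4$, $\zeta_5$ or $\zeta_{10}$ contributes $4$, $\zeta_6$ or $\zeta_3$ contributes $2$, $\zeta_4$ contributes $2$, and $\pm1$ contribute $1$ each. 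Since $1$ must appear (Proposition \ref{integral'}(1)), the bookkeeping leaves only a handful of possibilities for $[g^*]$: combinations like $[1,\zeta_{60}{:}16,\zeta_{12}{:}4,\pm1]$, $[1,\zeta_{60}{:}16,\zeta_{10}{:}4,\pm1]$ (or with $\zeta_5$), $[1,\zeta_{60}{:}16,\zeta_6{:}2,\zeta_4{:}2,1]$, and similar.

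Next I would eliminate each surviving candidate list using the Lefschetz fixed point formula (Proposition \ref{trace}) applied to $g$ and to suitable powers $g^d$, together with the structure results already proved. The key invariants: $\Tr(g^*|H^*(X)) = 2 + \sum \lambda_i$, and this must equal $e(\mathrm{Fix}(g))$, a nonnegative integer (a purely non-symplectic automorphism of a K3 surface has fixed locus a disjoint union of points and smooth curves, so actually one gets sign and parity constraints). Computing $\sum\lambda_i$ over each candidate list — using $R(n)$ from Lemma \ref{sum} for the sums of primitive roots — pins down $e(g)$, and similarly $e(g^2), e(g^3), e(g^4), e(g^5), e(g^6), e(g^{10}), e(g^{12}), e(g^{30})$. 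For the involution $g^{30}$ one checks whether $\dim H^2_{\rm et}(X,\bbQ_l)^{g^{30}} = 2$, which by Proposition \ref{integral'}/the NS-invariance machinery equals the number of $+1$ eigenvalues; when it is $2$, Lemma \ref{nsym2} kicks in and forces the rational ruled quotient together with the elliptic fibration picture. The candidate $[1,\zeta_{60}{:}16,\zeta_{12}{:}4,\pm1]$ is precisely the hypothesis of Lemma \ref{60}, which already derives a detailed geometric picture; but note Lemma \ref{60}(6) asserts the ``$+1$'' case, i.e.\ the invariant ample class is the \emph{only} $+1$ eigenvalue beyond the $16+4$, and conclusion (5) says $\mathrm{Fix}(g)$ is exactly $4$ points with $g^{12}$ having the same fixed locus — so $g^{12}$ is a non-symplectic automorphism of order $5$ with only $4$ isolated fixed points. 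I would then check this against the list of symplectic orders/fixed-point counts and the constraints on order-$5$ non-symplectic automorphisms, or more directly: since ${\rm ord}(g)=1.60$ would make $g^{12}$ purely non-symplectic of order $5$, whereas the geometry from Lemma \ref{60} (the elliptic fibration with $12$ cuspidal fibres, section $R$, trisection $C_{10}$, base action of order $10$) is incompatible with $g^{12}$ acting trivially on the base yet fixing only $4$ points — tracing through, the $+1$ eigenspace would have to be larger, contradicting list (6). For the other candidate lists I would similarly show the implied $e(g^d)$ values are inconsistent: typically $e(g)$ comes out negative or non-integral, or $g^{30}$ forces $\dim H^2(X)^{g^{30}}=2$ and then Lemma \ref{nsym2} gives a rational ruled quotient whose automorphism group cannot support the order-$60$ action with the forced number of cuspidal/nodal fibres, or the base $\bbP^1$ action has the wrong order.

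The cleanest uniform route, which I expect to use, is this: under ${\rm ord}(g)=1.60$, consider the symplectic element $g^{12}$ of order $5$. By Lemma \ref{Lefschetz}, $[g^{12*}] = [1,1.5,(\zeta_5{:}4).4]$, so $g^{12}$ has exactly $4$ fixed points and $\dim H^2(X)^{g^{12}} = 6$. Now $g$ acts on this $6$-dimensional invariant space, commuting with nothing extra, and since $g^{12}$ is a power of $g$, the eigenvalues of $g^*$ on $H^2(X)^{g^{12}}$ are $5$th-power-compatible: they are eigenvalues $\lambda$ with $\lambda^{12}$ an eigenvalue of $g^{12*}$ on that space, i.e.\ $\lambda^{12}=1$, so $\lambda^{60}=1$ and also $\lambda^{5}$... more carefully, $g^*$ restricted there has order dividing $60$ and its $12$th power is the identity, forcing all $6$ eigenvalues to be among $\{1,\zeta_{12}{:}4,\zeta_6{:}2,\zeta_4{:}2,-1,\zeta_3{:}2,\dots\}$ — combined with the global constraint that $\zeta_{60}{:}16$ already sits in the complementary $16$-dimensional space, the total list is forced to be one of the above candidates, and then the argument of Lemma \ref{60} applies to derive the $12$-cuspidal-fibre fibration. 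The main obstacle is the final contradiction in the $[1,\zeta_{60}{:}16,\zeta_{12}{:}4,1]$ case: here no crude Euler-number count suffices since everything is internally consistent, and one must use the \emph{geometry} — that $g$ would act on the elliptic fibration $\psi$ with $12$ cuspidal fibres permuted in orbits $\{F_\infty\},\{F_0\},\{F_{t_1},\dots,F_{t_{10}}\}$, and that the restriction of $g$ to a smooth fibre, or to the section $R$, is non-symplectic of order divisible by $60$ on a genus-$1$ curve, which is impossible (an automorphism of an elliptic curve has order at most $6$, or $4$, or $3$ depending on $j$; order $60$ cannot act faithfully). More precisely: $g$ preserves the general fibre $F$ of $\psi$ only up to the order-$10$ base action, but $g^{10}$ preserves each fibre over a fixed point of the base and, by Lemma \ref{60}(4), $\mathrm{Fix}(g^{10}) = R \cup \{\text{the }12\text{ cusps}\}$, so $g^{10}$ restricted to $F_\infty$ (a cuspidal cubic, whose smooth locus is $\mathbb{G}_a$) fixes only the cusp and $R\cap F_\infty$ — but an order-$6$ element (that's the order of $g^{10}$, since ${\rm ord}(g)=1.60$ makes $g^{10}$ of order $6$) acting on $\mathbb{G}_a \cong \mathbb{A}^1$ in characteristic $p>5$ is multiplication by a $6$th root of unity after recentering, which has a \emph{unique} fixed point on $\mathbb{A}^1$, consistent... so one must instead count on $C_{10}$ or push the contradiction through the action of $g$ itself (order $60$) on $F_\infty$: $g$ fixes $R\cap F_\infty$ and $C_{10}\cap F_\infty$ (two distinct points on the cuspidal cubic $F_\infty$ with its cusp) hence fixes the cusp too, giving three fixed points of $g$ on $F_\infty$, whereas an order-$60$ automorphism of $\mathbb{A}^1$ (the smooth locus minus... ) — this line needs care, and isolating the correct incompatibility is where the real work lies; I expect the resolution is that $g$ acting on $F_\infty \cong$ cuspidal cubic must have order dividing $p$ times a unit of small order, never $60$, contradiction.
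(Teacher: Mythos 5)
Your opening (forcing $[g^*]=[1,\,\zeta_{60}{:}16,\,\eta_1,\ldots,\eta_5]$ by Propositions \ref{integral} and \ref{integral'} and then eliminating the possible remainders via Lefschetz counts on powers of $g$) is the same strategy as the paper's Claims 1--7, though you only sketch it: in the paper these eliminations are not pure trace arithmetic but require the structure of ${\rm Fix}(g^{30})$ ($d$ smooth rational curves plus a curve of genus $d+5$, $d+7$ or $d+9$), orbit analysis of $g$ on those curves, and curve-theoretic facts (an order-$3$ automorphism of a low-genus curve cannot have $16$ fixed points, a genus-$8$ curve has no free involution, etc.). The genuine gap is your endgame in the surviving case $[1,\,\zeta_{60}{:}16,\,\zeta_{12}{:}4,\,1]$. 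The contradiction you hope for --- that $g$ cannot act with order $60$ on the preserved cuspidal fibre, or that the fibration picture of Lemma \ref{60} is incompatible with an order-$60$ action --- is false, because the example $(X_{60},g_{60})$ realizes exactly this geometry: $g_{60}$ preserves the cuspidal fibre $y^2+x^3=0$ over $t_1=0$ and acts on it by $(x,y)\mapsto(\zeta_{60}^2x,\zeta_{60}^3y)$, which has order $60$; the smooth locus of a cuspidal cubic is $\bbA^1$, whose automorphisms include multiplication by any root of unity of order prime to $p$, so there is no small bound on the order. (Two further slips: $g^{12}$ does not act trivially on the base --- the base action of $g$ has order $10$, so $g^{12}$ acts with order $5$ there; and your ``cleanest uniform route'' starts by declaring $g^{12}$ symplectic with $[g^{12*}]=[1,\,1.5,\,(\zeta_5{:}4).4]$, which is precisely what is \emph{not} true when ${\rm ord}(g)=1.60$, since then $g^{12}$ multiplies $\omega_X$ by a primitive $5$th root of unity.)

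The missing idea is that at this stage nothing in the cohomological or fibration data separates $1.60$ from $5.12$; the only usable datum is the action on $\omega_X$, and the paper exploits it through the quotient by the (under the $1.60$ hypothesis) non-symplectic order-$5$ automorphism $g^{12}$. Namely $X'=X/\langle g^{12}\rangle$ is a rational surface with $K_{X'}$ numerically trivial and $\rho(X')=6$ by Proposition \ref{diminv}, whose singularities are the four fixed points of Lemma \ref{60}(5), each of local type $\frac{1}{5}(3,3)$ or $\frac{1}{5}(2,4)$. Computing $K_Y^2$ on the minimal resolution $Y$ via the discrepancy divisors $D_p$ forces exactly one point of type $\frac{1}{5}(3,3)$ and three of type $\frac{1}{5}(2,4)$; hence either both fixed points on the section $R$ or both fixed points on $C_{10}$ have type $\frac{1}{5}(2,4)$, and then the intersection number of $K_Y$ with the proper transform $R'$ (respectively $C_{10}'$) is a non-integer --- the contradiction. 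Without an argument of this kind, which genuinely uses the non-symplecticity of $g^{12}$ rather than the elliptic-fibration geometry alone, your proof cannot close.
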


\begin{proof}
Suppose that ${\rm ord}(g)=1.60$. Then by Proposition
\ref{integral'} the action of $g^*$ on $H_{\rm et}^2(X,\bbQ_l)$, $l\neq{\rm char}(k)$, has
$\zeta_{60}\in \overline{\bbQ_l}$ as an eigenvalue and
$$[g^{*}]=[1,\, \zeta_{60}:16,\, \eta_1,\ldots, \eta_5]$$ where $[\eta_1,\ldots, \eta_5]$ is a combination of $\zeta_{12}:4$,  $\zeta_{10}:4$, $\zeta_{5}:4$, $\zeta_{6}:2$, $\zeta_{4}:2$, $\zeta_{3}:2$, $\pm 1$,
and the first eigenvalue corresponds to a $g$-invariant ample
divisor.

\bigskip
Claim 1: $[\eta_1,\ldots, \eta_5]\neq [\zeta_{10}:4,\,\pm 1]$,
$[\zeta_{5}:4,\,\pm 1]$.\\
Suppose that $[\eta_1,\ldots, \eta_5]=[\zeta_{10}:4,\,\pm 1]$ or
$[\zeta_{5}:4,\,\pm 1]$. Then Lefschetz fixed point formula gives
$$e(g^{30})=\Tr (g^{30*}|H^*(X))=-8$$ and ${\rm Fix}(g^{30})$ consists of $d$
smooth rational curves and a curve $C_{d+5}$ of genus $d+5$. We
have $0\le d\le 5$, since each fixed curve gives an invariant
vector in $\dim H_{\rm et}^2(X,\bbQ_l)$. Note that $e(g^2)=\Tr (g^{2*}|H^*(X))=1$.
Since ${\rm Fix}(g^2)\subset{\rm Fix}(g^{30})$, we infer that
${\rm Fix}(g^2)$ consists of a point. Note that $C_{d+5}\nsubseteq
{\rm Fix}(g^{10})$, since $e(g^{10})=16> e(g^{30})$. If $d=1, 2$ or $4$, then $g$ acts on the $d$
smooth rational curves and $g^2$ preserves at least one of them,
hence fixes at least 2 points. If $d=3$, then $g$ must rotate the
$3$ smooth rational curves and $g^{10}$ acts on the curve $C_8$
with $16$ fixed points, which is impossible. If $d=0$, then
$g^{10}$ gives an order 3 automorphism of the curve $C_5$ with
$16$ fixed points, impossible. If $d=5$, then $g$ must rotate the
$5$ smooth rational curves and $g^{5}$ preserves each of them,
hence $e(g^{5})\ge 10$. But $\Tr
(g^{5*}|H^*(X))\le 8$, contradicting the Lefschetz
fixed point formula.

\bigskip
Claim 2: $[\eta_1,\ldots, \eta_5]\neq [\zeta_{6}:2,\,\pm 1,\,\pm
1,\,\pm 1]$, $[\zeta_{3}:2,\,\pm 1,\,\pm 1,\,\pm 1]$.\\
 Suppose that
$[\eta_1,\ldots, \eta_5]=[\zeta_{6}:2,\,\pm 1,\,\pm 1,\,\pm 1]$ or
$[\zeta_{3}:2,\,\pm 1,\,\pm 1,\,\pm 1]$. This case can be handled
similarly. We see that $e(g^{30})=-8$ and ${\rm
Fix}(g^{30})$ consists of $d$ smooth rational curves and a curve
$C_{d+5}$ of genus $d+5$, $0\le d\le 5$. We also see that $e(g^2)=3$ and ${\rm Fix}(g^2)$ consists of either 3 points or
a point and a $\bbP^1$. Note that $C_{d+5}\nsubseteq {\rm
Fix}(g^{10})$, since $e(g^{10})=13> e(g^{30})$. If $d=0$ or 1, then $g^{10}$ gives an order 3
automorphism of the curve $C_{d+5}$ with at least $11$ fixed
points, which is impossible.  If $d=2$, then $g^2$ preserves 2
smooth rational curves, hence fixes at least 4 points. If $d=3$,
then $g$ must rotate the $3$ smooth rational curves and $g^{10}$
acts on the curve $C_8$ with $13$ fixed points, impossible. If
$d=4$, then $g^{3}$ preserves each of them, hence $e(g^{3})\ge 8$ or $e(g^{3})=8+e(C_9)=-8$, which is
possible only if
$[g^{*}]=[1,\,\zeta_{60}:16,\,\zeta_{3}:2,\,1,\,1,\,1]]$. Then
$e(g)=5> e(g^{2})$, but ${\rm Fix}(g)$ and
${\rm Fix}(g^{2})$ consist of isolated points and some $\bbP^1$'s.
 If $d=5$, then
$g$ must rotate the $5$ smooth rational curves and $g^{5}$
preserves each of them, hence $e(g^{5})\ge 10$. But
$\Tr (g^{5*}|H^*(X))\le 7$, 
contradicting the Lefschetz formula.

\bigskip
Claim 3: $[\eta_1,\ldots, \eta_5]\neq [(\zeta_{6}:2).2,\,\pm 1]$,
$[(\zeta_{3}:2).2,\,\pm 1]$, $[\zeta_{6}:2,\,\zeta_{3}:2,\,\pm
1]$.\\
 Suppose that $[\eta_1,\ldots, \eta_5]=[(\zeta_{6}:2).2,\,\pm
1]$, $[(\zeta_{3}:2).2,\,\pm 1]$ or
$[\zeta_{6}:2,\,\zeta_{3}:2,\,\pm 1]$. Note that $e(g^{30})=-8$ and ${\rm Fix}(g^{30})$ consists of $d$ smooth
rational curves and a curve $C_{d+5}$ of genus $d+5$, $0\le d\le
5$. We see that $e(g^2)=0$. Since ${\rm
Fix}(g^{2})\subseteq {\rm Fix}(g^{30})$, ${\rm
Fix}(g^2)=\emptyset$, thus  ${\rm Fix}(g)=\emptyset$ and
$[g^{*}]=[1,\,\zeta_{60}:16,\,(\zeta_{3}:2).2,\,-1]$. Note that
$C_{d+5}\nsubseteq {\rm Fix}(g^{10})$, since $e(g^{10})=10> e(g^{30})$. If $d=0$, then $g^{10}$
gives an order 3 automorphism of the curve $C_{5}$ with $10$ fixed
points, which is impossible. If $d=1, 2$ or 4, then $g^2$
preserves at least one smooth rational curve, hence fixes at least
2 points. If $d=3$, then $g$ must rotate the $3$ smooth rational
curves, hence $g^{15}$ acts freely on the curve $C_8$, since
$e(g^{15})=6$. But no genus 8 curve admits a free
involution. If $d=5$, then $g$ must rotate the $5$ smooth rational
curves and $g^{5}$ preserves each of them, hence $e(g^{5})\ge 10$. But $\Tr (g^{5*}|H^*(X))=0$.

\bigskip
Claim 4: $[\eta_1,\ldots, \eta_5]\neq
[\zeta_{4}:2,\,\zeta_{6}:2,\,\pm 1]$,
$[\zeta_{4}:2,\,\zeta_{3}:2,\,\pm 1]$.\\
 Suppose that
$[\eta_1,\ldots, \eta_5]=[\zeta_{4}:2,\,\zeta_{6}:2,\,\pm 1]$  or
$[\zeta_{4}:2,\,\zeta_{3}:2,\,\pm 1]$. In this case, $e(g^{30})=-12$ and ${\rm Fix}(g^{30})$ consists of $d$ smooth
rational curves and a curve $C_{d+7}$ of genus $d+7$, $0\le d\le
3$. We compute $$e(g^2)=\Tr (g^{2*}|H^*(X))=-1> e(g^{30}),$$ hence
$C_{d+7}\nsubseteq {\rm Fix}(g^{2})$. But then $e(g^2)\ge 0$.

\bigskip
Claim 5: $[\eta_1,\ldots, \eta_5]\neq [(\zeta_{4}:2).2,\,\pm 1]$.\\
Suppose that $[\eta_1,\ldots, \eta_5]=[(\zeta_{4}:2).2,\,\pm 1]$.
In this case, $e(g^{30})=-16$ and ${\rm Fix}(g^{30})$
consists of $d$ smooth rational curves and a curve $C_{d+9}$ of
genus $d+9$, $0\le d\le 1$. Since $e(g^2)=-2>
e(g^{30}),$ $C_{d+9}\nsubseteq {\rm Fix}(g^{2})$, but then $e(g^2)\ge 0$.

\bigskip
Claim 6: $[\eta_1,\ldots, \eta_5]\neq [\zeta_{4}:2,\,\pm 1,\,\pm
1,\,\pm 1]$.\\
Suppose that $[\eta_1,\ldots,
\eta_5]=[\zeta_{4}:2,\,\pm 1,\,\pm 1,\,\pm 1]$. In this case,
$e(g^{30})=-12$ and ${\rm Fix}(g^{30})$ consists of $d$
smooth rational curves and a curve $C_{d+7}$ of genus $d+7$, $0\le
d\le 3$. We compute
$$e(g^2)=\Tr (g^{2*}|H^*(X))=2> e(g^{30}),$$ hence $C_{d+7}\nsubseteq {\rm Fix}(g^{2})$ and ${\rm
Fix}(g^{2})$ consists of either 2 points or a $\bbP^1$, since
${\rm Fix}(g^{2})\subset{\rm Fix}(g^{30})$. Since ${\rm
Fix}(g)\subset{\rm Fix}(g^{2})$, we infer that $$e(g)=2\,\,\,{\rm or}\,\,\, 0.$$ By computing $[g^{15*}]$ and
$[g^{10*}]$, we see that
$$e(g)=e(g^{15})\,\,\,{\rm and}\,\,\, e(g^{10})=12.$$ If $d=0$, then $g^{10}$
gives an order 3 automorphism of the curve $C_{7}$ with $12$ fixed
points, impossible. If $d=2$, then $g^2$ preserves both
smooth rational curves, hence $e(g^2)\ge 4$. If $d=3$,
then $g^2$ cannot preserve two of the three smooth rational curves,
hence $g$ must rotate the three, then
$g^{15}$ preserves each of the three, hence
$e(g^{15})\ge 6$. If $d=1$, then $g^{15}$ acts freely
on the curve $C_8$. But no genus 8 curve admits a free involution.

\bigskip
Claim 7: $[\eta_1,\ldots, \eta_5]\neq [\pm 1,\,\pm 1,\,\pm 1,\,\pm
1,\,\pm 1]$.\\
Suppose that $[\eta_1,\ldots, \eta_5]=[\pm 1,\,\pm
1,\,\pm 1,\,\pm 1,\,\pm 1]$. In this case,\\ $e(g^{30})=-8$ and ${\rm Fix}(g^{30})$ consists of $d$ smooth
rational curves and a curve $C_{d+5}$ of genus $d+5$, $0\le d\le
5$. We also compute
$$e(g^2)=6,\,\,\,e(g^{15})=e(g), \,\,\,e(g^{10})=16.$$
Since $e(g^2)>e(g^{30})$, we see that
$C_{d+5}\nsubseteq {\rm Fix}(g^{2})$ and $$e(g^{15})=e(g)\le e(g^2)=6.$$ If $d\le
2$, then $g^{10}$ gives an order 3 automorphism of the curve
$C_{d+5}$ with $16-2d$ fixed points, which is impossible. Assume
$d\ge 4$. If $g^{15}$ preserves at least 4 of the $d$ smooth
rational curves, then $e(g^{15})\ge 8>6$. If $g^{15}$
preserves at most 2 of the $d$ smooth rational curves, then
$g^{2}$ preserves at least 4, hence $e(g^{2})\ge 8>6$.
If $g^{15}$ preserves exactly 3 of the $d$ smooth rational curves,
then $d=5$ and $g^{15}$ acts freely as an involution on the curve
$C_{10}$, a contradiction. Assume $d=3$. If $g$ rotates the $3$
smooth rational curves or fixes each of them, then $g^{15}$ fixes
each of them, hence acts freely on the curve $C_8$, a
contradiction. If $g$ fixes exactly one of the $3$ smooth rational
curves, then $g^2$ fixes each of them, hence acts freely on the
curve $C_8$, then $g$ acts freely on the curve $C_8$ and $e(g)=2$, then $g^{15}$ has $e(g^{15})=2$, hence
acts freely on the curve $C_8$. This proves the claim.

\bigskip
We may assume that $[g^{*}]=[1,\,\zeta_{60}:16,\,\zeta_{12}:4,\,\pm 1].$  Then by Lemma \ref{60}
 $$[g^{*}]=[1,\,\zeta_{60}:16,\,\zeta_{12}:4,\, 1].$$
Consider the order 5 automorphism $g^{12}$. It is  non-symplectic and the quotient
$$X':=X/\langle g^{12}\rangle$$ is a singular rational surface with
$K_{X'}$ numerically trivial. Furthermore, by Proposition \ref{diminv} Picard number $\rho(X')=6$ .

\bigskip
Claim 8: $X'=X/\langle g^{12} \rangle$ has four singular points,
one of type $\frac{1}{5}(3,3)$ and three of type
$\frac{1}{5}(2,4)$.\\
 To prove the claim, note first that ${\rm Fix}(g^{12})$ consists of
the 4 points from Lemma \ref{60}, 2 points of $R$ and 2 points of $C_{10}$. Since
$$g^{12*}\omega_X=\zeta_5\omega_X\,\,\, {\rm for\,\, some}\,\, \zeta_5\in k,$$  there are two types of local action of $g^{12}$ at a fixed
point, $\frac{1}{5}(3,3)$ and $\frac{1}{5}(2,4)$. Let $a$ and $b$
be the number of points respectively of the two types. Then
$$a+b=4.$$ Let $\varepsilon: Y\to X'$ be a minimal resolution. Then
$$K_Y=\varepsilon^*K_{X'}-\sum D_p$$
where $D_p$ is an effective $\bbQ$-divisor supported on the
exceptional set of the singular point $p\in X'$. Here $``="$ means numerical equivalence. Thus
$$K_Y^2=\sum D_p^2=-\sum K_YD_p.$$
See, e.g., Lemma 3.6 \cite{HK1} for the formulas of $D_p$ and
$K_YD_p$, which are valid not only in the complex case but also
for tame quotient singular points in positive characteristic. We
compute
$$K_Y^2= 10-\rho(Y)= 10-\{\rho(X')+a+2b\}=4-a-2b.$$ On the other hand,
$K_YD_p=\frac{9}{5}$ if $p$ is of type $\frac{1}{5}(3,3)$, and
$K_YD_p=\frac{2}{5}$ if $p$ is of type $\frac{1}{5}(2,4)$, thus
$$K_Y^2= -\frac{9}{5}a-\frac{2}{5}b.$$ Solving the system, we get $a=1$ and $b=3$. This proves the claim.

\bigskip
Now by Claim 8, we compute that
$$K_Y=-\frac{3A}{5}-\sum_{i=1}^3 \frac{A_{1i}+2A_{2i}}{5}$$
where $A$ and $A_{ji}$ are exceptional curves with $A^2=-5$,
$A_{1i}^2=-2$, $A_{2i}^2=-3$, $A_{1i}.A_{2i}=1$. If the 2 points
of $R$ are of type $\frac{1}{5}(2,4)$, then the proper transform
$R'$ of the image of $R$ in $X'$ has intersection number with
$K_Y$,
$$K_Y.R'=-\frac{1}{5}-\frac{1}{5}, \,\,-\frac{1}{5}-\frac{2}{5}\,\,{\rm or}\,\,-\frac{2}{5}-\frac{2}{5},$$
none is an integer. If the 2 points of $C_{10}$ are of type
$\frac{1}{5}(2,4)$, then the proper transform $C_{10}'$ of the
image of $C_{10}$ in $X'$ has intersection number with $K_Y$ which
cannot be an integer, a contradiction.
\end{proof}

\begin{lemma}\label{2.30} ${\rm  ord}(g)\neq 2.30$, $3.20$, $4.15$, $6.10$.
\end{lemma}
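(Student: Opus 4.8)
The plan is to dispatch the four cases $2.30$, $3.20$, $4.15$, $6.10$ separately; in each, write ${\rm ord}(g)=m.n$ with $mn=60$ and $m\in\{2,3,4,6\}$, and work with the \emph{symplectic} power $h:=g^{60/m}$, of order $m$, whose eigenvalue list $[h^{*}]$ on $H^2_{\rm et}(X,\bbQ_l)$ and whose number of fixed points are furnished by Lemma~\ref{Lefschetz} and the table before it. Since $h=g^{60/m}$ commutes with $g$, each eigenspace of $h^{*}$ is $g^{*}$-invariant, and on the $\mu$-eigenspace of $h^{*}$ the eigenvalues of $g^{*}$ are the $60$-th roots of unity $\lambda$ with $\lambda^{60/m}=\mu$. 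Together with Proposition~\ref{integral} and Proposition~\ref{integral'}(2) — which put $\zeta_n$ and its $\phi(n)$ conjugates into $[g^{*}]$, necessarily inside the $(+1)$-eigenspace of $h^{*}$ because $\zeta_n^{60/m}=1$ — this narrows $[g^{*}]$ to a short list in each case. For $4.15$ this already suffices: $h=g^{15}$ is symplectic of order $4$, so its $(+1)$-eigenspace is $8$-dimensional, yet it must contain the $g$-invariant ample class together with $\zeta_{15}$ and its seven conjugates, hence at least $9$ linearly independent vectors.

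For $2.30$, consider $g^{15}$, of order $4$, with $(g^{15})^{2}=g^{30}$ a symplectic involution, so $[g^{30*}]=[1.14,\,-1.8]$ and $X^{g^{30}}$ consists of $8$ points. The eigenspace splitting under $g^{30*}$ together with Proposition~\ref{integral} forces $[g^{15*}]=[1.a,\,(-1).(14-a),\,(\zeta_4:2).4]$. On one hand $X^{g^{15}}\subseteq X^{g^{30}}$ is finite, so $e(g^{15})=\Tr(g^{15*}|H^{*}(X))=2a-12$ lies in $[0,8]$, forcing $a\ge 6$; on the other hand the $14$-dimensional $(+1)$-eigenspace of $g^{30*}$ holds only the ample class, the block $\zeta_{30}:8$ (whose eigenvalues are $-1$ under $g^{15*}$), and $5$ further eigenvalues, so $a\le 6$. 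Hence $a=6$, $e(g^{15})=0$, and $g^{15}$ acts freely; then $Y:=X/\langle g^{15}\rangle$ is a smooth projective surface with $e(Y)=6$ and $K_Y$ numerically trivial, whence $\chi(\mathcal O_Y)=\tfrac1{12}(K_Y^{2}+e(Y))=\tfrac12\notin\bbZ$, which is impossible.

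For $6.10$ and $3.20$ I would argue similarly, using respectively that $g^{10}$ is symplectic of order $6$ (two fixed points) and $g^{20}$ is symplectic of order $3$ (six fixed points), together with ${\rm Fix}(g^{a})\subseteq{\rm Fix}(g^{b})$ for $a\mid b$ (Lemma~\ref{fix}). In the case $6.10$, $X^{g}\subseteq X^{g^{10}}$ gives $e(g)\le 2$; discarding eigenvalues of order $60$ (which would need $16$ dimensions inside a $4$-dimensional eigenspace of $g^{10*}$) and running the bookkeeping above leaves three explicit possibilities for $[g^{*}]$, and for each one of the auxiliary automorphisms $g^{5}$ or $g^{15}$ has $\Tr(g^{5*}|H^{*}(X))<0$ (resp.\ $\Tr(g^{15*}|H^{*}(X))<0$) while its fixed locus is confined to the finite set $X^{g^{10}}$ (resp.\ $X^{g^{30}}$, with $g^{30}$ a symplectic involution) — a contradiction with Proposition~\ref{trace}. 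In the case $3.20$, $X^{g^{10}}\subseteq X^{g^{20}}$ gives $0\le e(g^{10})\le 6$; on the $12$-dimensional non-invariant part of $g^{20*}$, taking square roots forces $g^{10*}$ to act with eigenvalues of order $3$ or $6$ only, so $e(g^{10})$ equals their sum minus $4$, which must then be $0$ or $2$. If it is $0$, $g^{10}$ acts freely, giving $\chi(\mathcal O_{X/\langle g^{10}\rangle})=\tfrac13\notin\bbZ$; if it is $2$, then $[g^{*}]=[1,\,\zeta_{20}:8,\,-1,\,(\zeta_{12}:4).3]$ (the sign coming from $e(g^{5})\le 2$), whence $[g^{30*}]=[1.2,\,-1.20]$, so $g^{30}$ is an involution with $\dim H^2_{\rm et}(X,\bbQ_l)^{g^{30}}=2$ and Lemma~\ref{nsym2} applies: $X^{g^{30}}$ is a smooth genus-$9$ curve or the disjoint union of a section and a smooth genus-$10$ curve. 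But $g^{6}$ is purely non-symplectic of order $10$ with $X^{g^{6}}\subseteq X^{g^{30}}$, and $\Tr(g^{6*}|H^{*}(X))=-6$; since any component of $X^{g^{6}}$ is either a point or one of those fixed curves (of Euler numbers $-16$, resp.\ $2$ and $-18$), the Euler number of $X^{g^{6}}$ cannot be $-6$ — contradiction. This exhausts all cases.

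The hard part is the middle step in the three genuinely case-heavy situations: determining $[g^{*}]$ (hence $[g^{k*}]$ for the auxiliary power $k\in\{5,6,10,15\}$) sharply enough. This requires combining the eigenspace decomposition under the symplectic power $h$, the integrality of Proposition~\ref{integral}, the forced cyclotomic blocks of Proposition~\ref{integral'}(2), and repeated caps on fixed-point counts coming from ${\rm Fix}(g^{a})\subseteq{\rm Fix}(g^{b})$ and the Lefschetz formula — exactly the style of computation already carried out in Lemma~\ref{1.60}. Once $[g^{*}]$ is pinned down, each contradiction is immediate: either an auxiliary power acts freely and Noether's formula fails over $\bbZ$, or an auxiliary power has a fixed locus of negative Euler number while being trapped in finitely many points or in the rigid curve configuration of Lemma~\ref{nsym2}.
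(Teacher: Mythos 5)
Your case ${\rm ord}(g)=4.15$ is correct and clean, and your eigenvalue bookkeeping for $2.30$ up to the point ``$a=6$, hence $e(g^{15})=0$ and $g^{15}$ is fixed point free'' is also fine. The genuine gap is the step that is supposed to produce the contradiction in $2.30$ (and likewise in the $e(g^{10})=0$ branch of $3.20$): from ``$g^{15}$ acts freely'' you conclude that $Y=X/\langle g^{15}\rangle$ is a \emph{smooth} surface with $e(Y)=6$ and then invoke Noether's formula to get $\chi(\mathcal O_Y)=\tfrac12$. But $\langle g^{15}\rangle$ has order $4$ and contains $g^{30}$, which is a symplectic involution with $8$ fixed points; freeness of the generator does not give freeness of the group. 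The quotient map is not \'etale: $Y$ has four $A_1$ singularities (the $8$ fixed points of $g^{30}$ are permuted in pairs by $g^{15}$), $e(Y)=\tfrac{24-8}{4}+4=8$, not $6$, and for the minimal resolution $\widetilde Y$ one finds $e(\widetilde Y)=12$, $K_{\widetilde Y}$ numerically trivial ($2K$ trivial, since $g^{15}$ acts on $\omega_X$ by $-1$ and the $A_1$ points are crepant), so $\chi(\mathcal O_{\widetilde Y})=1$ -- numerically the data of an Enriques surface, with no contradiction at all. So the case $2.30$ is not eliminated by your argument; the same objection kills the ``$\chi=\tfrac13$'' step for $3.20$, where $g^{20}\in\langle g^{10}\rangle$ has $6$ fixed points. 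What is actually needed at this stage is more trace bookkeeping: $a=6$ forces the five remaining eigenvalues $\eta_j$ on the $(+1)$-eigenspace of $g^{30*}$ to satisfy $\eta_j^{15}=1$, i.e.\ to be among $1$, $\zeta_3$, $\zeta_5$ blocks, and then comparing $\Tr(g^{3*}|H^*(X))$ or $\Tr(g^{5*}|H^*(X))$ with the bound $e(g^{3}),e(g^{5})\le e(g^{30})=8$ coming from ${\rm Fix}(g^{a})\subset{\rm Fix}(g^{30})$ yields the contradiction.

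Two further points. Your treatment of $6.10$ is only asserted (``running the bookkeeping above leaves three explicit possibilities''), so as written it is a plan rather than a proof; given how delicate the analogous eliminations are in Lemma \ref{1.60}, those possibilities and the sign choices must actually be listed and checked. Finally, note that the paper itself does not reprove these four cases: its proof of this lemma is a citation to Lemmas 4.5 and 4.7 of \cite{K}, where the eliminations are carried out precisely by the kind of eigenvalue--trace comparison sketched above (your second half of the $3.20$ case, via Lemma \ref{nsym2} and the Euler number of ${\rm Fix}(g^{6})$, is in that correct spirit and looks sound).
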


\begin{proof} These cases are much simpler than the previous one, and are contained in \cite{K}, Lemma 4.5 and 4.7.
\end{proof}

\begin{lemma}\label{5.12} If ${\rm  ord}(g)=5.12$, then
$$[g^{*}]=[1,\,\zeta_{12}:4,\,1,\,\zeta_{60}:16].$$ 
\end{lemma}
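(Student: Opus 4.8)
The plan is to determine $[g^*]$ numerically, combining integrality of the characteristic polynomial (Proposition~\ref{integral}) with the Lefschetz fixed point formula (Proposition~\ref{trace}) for several powers of $g$. By Proposition~\ref{integral'}(2), since $g^*|H^0(X,\Omega^2_X)$ has a primitive $12$th root of unity as an eigenvalue, $[\zeta_{12}:4]\subset[g^*]$, so with the invariant ample class (Proposition~\ref{integral'}(1)) we may write
$$[g^*]=\sum_{d\mid 60}r_d\,[\zeta_d:\phi(d)],\qquad r_d\ge 0,\quad r_1\ge 1,\quad r_{12}\ge 1,\quad \sum_{d\mid 60}r_d\phi(d)=22.$$
Since ${\rm ord}(g)=5.12$, the power $g^{12}$ is symplectic of order $5$, $g^{30}$ is a non-symplectic involution, $g^5$ is purely non-symplectic of order $12$, and $g^{10},g^{15},g^{20}$ are purely non-symplectic of orders $6,4,3$; these are the powers to feed into the Lefschetz formula.

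First, Lemma~\ref{Lefschetz} applied to the symplectic automorphism $g^{12}$ of order $5$ gives $[g^{12*}]=[1,1.5,(\zeta_5:4).4]$, hence $\dim H^2_{\rm et}(X,\bbQ_l)^{g^{12}}=6$ and each primitive $5$th root occurs exactly $4$ times in $[g^{12*}]$. Sorting the blocks of $[g^*]$ by the order of $\lambda^{12}$ turns this into
$$r_1+r_2+2r_3+2r_4+2r_6+4r_{12}=6,\qquad 4r_5+4r_{10}+8r_{15}+8r_{20}+8r_{30}+16r_{60}=16.$$
The first relation forces $r_{12}=1$, hence $r_3=r_4=r_6=0$ and $(r_1,r_2)\in\{(2,0),(1,1)\}$. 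Next, $g^{12}=(g^6)^2$ gives ${\rm Fix}(g^6)\subset{\rm Fix}(g^{12})$, a set of four points (Lemma~\ref{fix}(1) and $f(g^{12})=4$), so $0\le e(g^6)\le 4$; evaluating $e(g^6)=\Tr(g^{6*}|H^*(X))$ with Lemma~\ref{sum} and the second relation above yields $e(g^6)=-4+4r_{20}+8r_{60}$, whence $(r_{20},r_{60})\in\{(0,1),(1,0),(2,0)\}$. When $(r_{20},r_{60})=(0,1)$ the second relation forces $r_5=r_{10}=r_{15}=r_{30}=0$, so if moreover $(r_1,r_2)=(2,0)$ we obtain exactly $[g^*]=[1,\zeta_{12}:4,1,\zeta_{60}:16]$.

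It remains to rule out $(r_1,r_2)=(1,1)$ and the options $(r_{20},r_{60})\in\{(1,0),(2,0)\}$, which is a finite case check modelled on the proof of Lemma~\ref{1.60}. For each surviving list one computes $e(g^k)$, $k\in\{5,10,15,20,30\}$, by Lemma~\ref{sum}, and uses: $g^{30}$ is a non-symplectic involution, so ${\rm Fix}(g^{30})$ is a disjoint union of smooth curves, and whenever $\dim H^2_{\rm et}(X,\bbQ_l)^{g^{30}}=2$ Lemma~\ref{nsym2} pins it down as a genus-$9$ curve or a section plus a genus-$10$ curve; then via ${\rm Fix}(g^5)\subset{\rm Fix}(g^{10})\subset{\rm Fix}(g^{30})$ the automorphism $g^{10}$ (resp.\ $g^5$) acts on those curves with order dividing $3$ (resp.\ $6$), and Riemann--Hurwitz bounds its fixed points, usually contradicting the computed $e(g^{10})$ or $e(g^5)$ --- e.g.\ $(r_{20},r_{60})=(2,0)$ forces $e(g^{30})=-16$ and $e(g^{10})=-10$, incompatible with the possible Euler numbers of ${\rm Fix}(g^{10})$ on a genus-$9$ or genus-$10$ curve. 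In case $(r_1,r_2)=(1,1)$ the invariant lattice of $g$ has rank $1$, so $X/\langle g\rangle$ has Picard number $1$ (Proposition~\ref{diminv}(2)), a strong constraint for a K3 quotient; together with the tame quotient-singularity computation of $K_Y^2$ on a minimal resolution $Y$ of $X/\langle g\rangle$ or of $X/\langle g^{12}\rangle$ (which carries four $A_4$-points), done exactly as in Claim~8 and the closing paragraph of Lemma~\ref{1.60}, this case is eliminated. For the remaining $(r_{20},r_{60})\in\{(1,0),(2,0)\}$ one may also invoke the classification of purely non-symplectic automorphisms of orders $3,4,6,12$ from \cite{K} to fix $[g^{20*}],[g^{15*}],[g^{10*}],[g^{5*}]$.

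The substance, and the main obstacle, is this last step: although Steps~1--2 leave only a handful of eigenvalue-lists, each needs its own contradiction, and the leverage throughout is the rigid way the fixed loci of $g^5,g^{10},g^{15},g^{20},g^{30}$ and the four points ${\rm Fix}(g^{12})$ interlock --- a wrong list forces an automorphism of a low-genus curve with too many fixed points, the very mechanism of Lemma~\ref{1.60}. Once all but one candidate is removed, $[g^*]=[1,\zeta_{12}:4,1,\zeta_{60}:16]$ is what remains.
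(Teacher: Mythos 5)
Your first two steps reproduce the paper's bookkeeping correctly: $[g^{12*}]=[1,1.5,(\zeta_5:4).4]$ splits $[g^*]$ into a $6$-dimensional ``order dividing $12$'' part and a $16$-dimensional ``order divisible by $5$'' part, $r_{12}=1$, and the bound $0\le e(g^a)\le 4$ for $a\mid 12$ (from ${\rm Fix}(g^a)\subset{\rm Fix}(g^{12})=4$ points) applied at $a=6$ gives $e(g^6)=-4+4r_{20}+8r_{60}$, which matches the paper's elimination of the pure $\zeta_5/\zeta_{10}$ lists. The gap is in the final step, which is where the actual content lies and which you leave as a sketch. The paper kills every list containing $\zeta_{15}$, $\zeta_{30}$ or $\zeta_{20}$ by the same bound at $a=2$: ${\rm Fix}(g^2)\subset{\rm Fix}(g^{12})$ forces $e(g^2)\le 4$, while those lists give $e(g^2)\ge 5$ (resp.\ $\ge 6$). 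You set this bound up but never use it at $a=2$; instead you propose to argue through $e(g^k)$ for $k\in\{5,10,15,20,30\}$, Lemma~\ref{nsym2} and curve automorphisms. That mechanism does dispose of $(r_{20},r_{60})=(2,0)$ (there $\dim H^2(X)^{g^{30}}=2$, $e(g^{10})=-10$ is indeed impossible inside $C_9$ or $R\cup C_{10}$), but it does not obviously dispose of the $(r_{20},r_{60})=(1,0)$ lists with $r_{15}=1$ or $r_{30}=1$: for $[1,\zeta_{12}:4,\pm1,\zeta_{20}:8,\zeta_{15}:8]$ one finds $\dim H^2(X)^{g^{30}}=10$, $e(g^{30})=0$, $e(g^{10})=-6$, $e(g^5)\in\{-2,0\}$, and none of your stated tools (Lemma~\ref{nsym2} is unavailable, Riemann--Hurwitz gives nothing definite) produces a contradiction, whereas $e(g^2)=9>4$ does so instantly. ``Usually contradicting'' and an appeal to the classification in \cite{K} do not close these cases.

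The second genuine gap is the elimination of $(r_1,r_2)=(1,1)$, i.e.\ of the eigenvalue $-1$. Your proposed analogue of Claim~8 fails here: since ${\rm ord}(g)=5.12$, the automorphism $g^{12}$ is \emph{symplectic} of order $5$, so $X/\langle g^{12}\rangle$ has four rational double points of type $A_4$ and its minimal resolution is again a K3 surface with trivial canonical class; the non-integrality computation of $K_Y\cdot R'$ that drives Claim~8 and the closing paragraph of Lemma~\ref{1.60} (which concern a \emph{non-symplectic} order-$5$ quotient with singularities $\frac15(3,3)$, $\frac15(2,4)$) gives no information, and $\rho(X/\langle g\rangle)=1$ is not by itself a contradiction. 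In the paper this sign is not settled by lattice or trace arguments at all: once $[g^*]=[1,\zeta_{60}:16,\zeta_{12}:4,\pm1]$ is reached, Lemma~\ref{60} is invoked, whose geometric analysis (the $g$-invariant elliptic fibration with twelve cuspidal fibres and ${\rm Fix}(g)$ consisting of exactly four points, hence $e(g)=4$) forces the last eigenvalue to be $+1$. Your proof needs either that lemma or an equivalent geometric argument; as written, the $-1$ case is not eliminated.
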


\begin{proof}
Since $g^{12}$ is symplectic of
order 5, $$[g^{12*}]=[1,\,1.5,\, (\zeta_5:4).4]$$ and for any positive integer $a$ dividing
12, ${\rm Fix}(g^{a})\subset {\rm Fix}(g^{12})$ and
$$0\le e(g^{a})\le e(g^{12})=4.$$ By
Proposition \ref{integral'}, $\zeta_{12}\in [g^{*}]$. Thus we infer
that
$$[g^{*}]=[1,\,\zeta_{12}:4,\,\pm 1,\, \eta_1,\ldots, \eta_{16}]$$ where $[\eta_1,\ldots, \eta_{16}]$ is a combination of $\zeta_{5}:4$, $\zeta_{10}:4$, $\zeta_{15}:8$,
$\zeta_{20}:8$,  $\zeta_{30}:8$, $\zeta_{60}:16$ and the first
eigenvalue corresponds to a $g$-invariant ample divisor.

\medskip
Assume that $[\eta_1,\ldots, \eta_{16}]$ contains $[\zeta_{15}:8]$
or $[\zeta_{30}:8]$. Then
$$[g^{2*}]=[1,\, (\zeta_6:2).2,\,1,\,\zeta_{15}:8,\,\tau_1,\ldots, \tau_8]$$
where $[\tau_1,\ldots, \tau_8]$ is a combination of $\zeta_{5}:4$,
$\zeta_{10}:4$, $\zeta_{15}:8$, hence $\sum\tau_j\ge -2$ and $e(g^2)=\Tr
(g^{2*}|H^*(X))=7+\sum\tau_j\ge 5$, contradicting $e(g^{2})\le 4.$

Assume that $[\eta_1,\ldots, \eta_{16}]$ contains
$[\zeta_{20}:8]$. In this case,
$$[g^{2*}]=[1,\, (\zeta_6:2).2, \, 1,\,(\zeta_{10}:4).2,\,\tau_1,\ldots,
\tau_8]$$ where $[\tau_1,\ldots, \tau_8]$ is a combination of $\zeta_{5}:4$,
$\zeta_{10}:4$, $\zeta_{15}:8$. Since  $\sum\tau_j\ge -2$,
$e(g^2)=\Tr (g^{2*}|H^*(X))=8+\sum\tau_j\ge 6$, contradicting $e(g^{2})\le 4.$ 

Assume that $[\eta_1,\ldots, \eta_{16}]$ is a combination of
$\zeta_{5}:4$, $\zeta_{10}:4$. Then
 $$[g^{6*}]=[1,\, -1.4, \, 1,\,(\zeta_{5}:4).4]$$
and $e(g^6)=\Tr (g^{6*}|H^*(X))=-4,$
contradicting $e(g^{6})\ge 0$.\\ Therefore
$[\eta_1,\ldots, \eta_{16}]=[\zeta_{60}:16]$.
Now Lemma \ref{60} applies.
\end{proof}

\bigskip
\noindent {\bf Proof of Theorem \ref{main}.} 

(1) follows from Lemmas \ref{1.60} and \ref{2.30}.

(2) We know ${\rm  ord}(g)=5.12$. By Lemma \ref{5.12} we can apply Lemma \ref{60}, and will use the elliptic structure and the notation there.  Let
$$y^2+x^3+A(t_0,t_1)x+B(t_0,t_1) = 0$$ be the Weierstrass equation of  the $g$-invariant elliptic
pencil, where $A$ (resp. $B$) is a binary form of degree  $8$
(resp. $12$). By Lemma \ref{60}, $g$ leaves invariant the
section $R$ and the action of $g$ on the base of the fibration
$\psi:X\to {\bf P}^1$ is of order 10. After a linear change of the
coordinates $(t_0,t_1)$ we may assume that $g$ acts on the base by
$$g:(t_0,t_1)\mapsto (t_0,\zeta_{60}^6t_1).$$
We know that $g$ preserves two cuspidal fibres $F_0$, $F_{\infty}$
and makes the remaining 10 cuspidal fibres to form one orbit. Thus
the discriminant polynomial $$\Delta =
-4A^3-27B^2=ct_0^2t_1^2(t_1^{10}-t_0^{10})^2$$ for some constant
$c\in k$, as it must have two double roots (corresponding to the
fibres $F_0$, $F_{\infty}$) and one orbit of double roots. We know
that the zeros of $A$ correspond to either cuspidal fibres or
nonsingular fibres with ``complex multiplication'' automorphism of
order 6. Since this set is invariant with respect to the order 10
action of $g$ on the base, we see that the only possibility is $A
= 0$. Then the above Weierstrass equation can be written in
the form
$$y^2+x^3+at_0t_1(t_1^{10}-t_0^{10}) = 0$$ 
for some constant $a$. A suitable
linear change of variables makes $a=1$ without changing the action
of $g$ on the base. Thus $$X \cong X_{60}$$ as an elliptic
surface. We may assume that
$$g^*\Big(\frac{dx\wedge dt}{y}\Big)=\zeta_{60}^5\frac{dx\wedge dt}{y}$$ for some primitive 12th root of unity $\zeta_{60}^5$. Here a choice of such a root of unity is equivalent to a choice of a generator of the cyclic group $\langle g\rangle$. Since $g^{10}$ is of order 6 and acts trivially
on the base, it is a complex multiplication of order 6 on a general fibre, so
$$g^{10}(x, y, t_0, t_1)=(\zeta_6^2x, \zeta_6^3y, t_0, t_1).$$ Note that $${\rm Fix}(g)=\{{\rm the\,
two\,cusps\, of}\,F_0\,{\rm and}\, F_{\infty}\}\cup (R\cap
F_0)\cup (R\cap F_{\infty}).$$ Analysing the local action of $g$
at the fixed point $(x, y, t_0, t_1)=(0,0,1,0)$, the cusp of
$F_0$, we infer that
$$g(x, y, t_0, t_1)=(\zeta_{60}^2x, \zeta_{60}^3y, t_0,
\zeta_{60}^6t_1).$$ Here we first determine the linear terms, then
see that the higher degree terms must vanish. This completes the
proof of Theorem \ref{main} in the positive characteristic case.

\section{Proof: the Complex Case}

Throughout this section, $X$ is a complex K3 surface.

A non-projective K3 surface cannot admit a non-symplectic
automorphism of finite order (see \cite{Ueno}, \cite{Nik}), and
its automorphisms of finite order are symplectic, hence of order
$\le 8$. Thus we may assume that $X$ is projective. The proofs of
Lemma \ref{nsym2}, \ref{60}, \ref{1.60}, \ref{2.30} and \ref{5.12} go word for word, once
the $l$-adic cohomology $H^2_{\rm et}(X,\bbQ_l)$ is replaced by the
integral singular cohomology $H^2(X,\bbZ)$,
and Proposition \ref{trace} by the usual topological Lefschetz
fixed point formula. ``Proof of Theorem \ref{main}" also
goes word for word.

%\textit{Acknowledgement}

%\medskip

%%%%%%%%%%%%%%%%%%%%%%%%%%%%%%%%%%%%%%%%%%%%%%%%%%%%%%%%%%%%%%%%%%%%%%%%
%\bibliographystyle{amsplain}

\end{document}